\newtheorem{thm}{Theorem}[section]
\newtheorem{lemma}[thm]{Lemma}
\newtheorem{prop}[thm]{Proposition}
\newtheorem{hp}[thm]{Hypothesis}
\newtheorem{defn}{Definition}[section]
\newtheorem{remark}{Remark}[section]
\newenvironment{sistema}
{\left\lbrace\begin{array}{@{}l@{}}}
{\end{array}\right.}
\DeclarePairedDelimiter{\abs}{\lvert}{\rvert}
\DeclarePairedDelimiter{\norm}{\lVert}{\rVert}
\newcommand{\opn}[1]{\operatorname{#1}}
\newcommand{\E}{\mathbb{E}}
\newcommand{\U}{\mathcal{U}}
\newcommand{\R}{\mathbb{R}}
\newcommand{\mP}{\mathbb{P}}
\newcommand{\F}{\mathcal{F}}
\newcommand{\spazio}{\left( \Omega, \F, \mP \right)}
\def\call{{\cal L}}
\title{Stochastic maximum principle for optimal control  of a class of nonlinear SPDEs with dissipative drift
\thanks{Financial support from the grant MIUR-PRIN 2010-11
``Evolution differential problems: deterministic and stochastic approaches and their interactions''
is gratefully acknowledged. The second author have been supported by the Gruppo
Nazionale per l'Analisi Matematica, la Probabilit\`a e le loro
Applicazioni (GNAMPA) of the Istituto Nazionale di Alta Matematica
(INdAM).
}
}
\author{Marco Fuhrman \thanks{Email: marco.fuhrman@polimi.it}}
\affil{Dipartimento di Matematica, Politecnico di Milano\\ via Bonardi 9, 20133 Milano, Italia}
\author{Carlo Orrieri \thanks{Email: carlo.orrieri01@ateneopv.it}}
\affil{Dipartimento di Matematica, Universit\`a di Pavia\\ via Ferrata 1, 27100 Pavia, Italia}
\date{}
\begin{document}
\maketitle

\begin{abstract}
\noindent We prove a version of the stochastic maximum principle, in the sense of Pontryagin, for
  the finite horizon optimal control of a stochastic partial differential equation driven by an infinite dimensional additive noise. In particular we treat the case in which the non-linear term is of Nemytskii type, dissipative and with polynomial growth. The performance functional
  to be optimized is fairly general and may depend on point evaluation
  of the controlled equation. The results can be applied to a large class of non-linear parabolic equations such as reaction-diffusion equations.
\end{abstract}

\section{Introduction}

In the framework of classical optimal control theory,
the maximum principle in the sense of Pontryagin is generally understood
as a necessary condition for optimality of a control and its associated trajectory.
Under some additional assumptions, the conditions may also become sufficient
to ensure optimality.
In the context of stochastic systems, a very general formulation
of the maximum principle was obtained by  Peng in \cite{peng1990general},
in the case of a controlled finite dimensional equation driven by the Brownian motion.
This work was later extended in several directions by many authors.
Here we pay particular attention to the case of infinite dimensional controlled systems,
in particular controlled stochastic partial differential equations (SPDEs), on
a finite time horizon.

The first result in infinite dimensions is due to Bensoussan
\cite{bensoussan1983stochastic} in the case of a convex set of control actions
and a possibly infinite-dimensional Brownian noise.
Later in \cite{hu1990maximum}  the case of   diffusion coefficient unaffected
by the control parameter was treated.  Several versions of the
stochastic maximum principle for general evolution equations were proved in
 \cite{tang1993maximum}, other versions can be found in \cite{du2013maximum}, \cite{du2012stochastic} and \cite{lu2012general}, under various conditions.
The results in \citep{fuhrman2012stochastic}, \citep{fuhrman2013stochastic}
can be directly applied to a large class of concrete controlled SPDEs of parabolic type,
but only in the case of a finite-dimensional Brownian noise.
It is worth emphasizing that the general case (in which the control parameter enters the diffusion term,
the set of control actions is general --in particular non convex-- and
the noise is infinite-dimensional) is still an open problem.
\\
The aim of this work is to prove a stochastic maximum principle for a class of controlled
  semilinear SPDEs of  reaction-diffusion  type written formally as
\begin{equation}\label{SPDEintro}
\begin{sistema}
\dfrac{\partial X}{\partial t}(t,\xi) = \mathcal{A}X(t,\xi) + f(X(t,\xi),u(t)) + B \dfrac{\partial^2 w}{\partial \xi \partial t}(t,\xi)  \\
X(0,\xi) = x_0(\xi), \\
X(t,\xi) = 0 \quad\text{on} \quad \partial \mathcal{O},
\end{sistema}
\end{equation}
where $\xi \in \mathcal{O}$ a bounded  subset of $\R^d$, and $t \in [0,T]$
for some fixed $T>0$. $u(\cdot)$ is the control process, taking values in
a general space of control actions $U$,
$\mathcal{A}$ is an operator of elliptic type,
$x_0$ a given initial condition,
${\partial^2 w}/{\partial \xi \partial t}$ is a symbolic notation for
a space-time white noise, $B$ specifies the covariance operator of the noise,
which enters the equation in an additive way, and finally $f:\R\times U\to \R$
is a given function. Thus, the non-linear part of the drift is
the so-called Nemytskii operator (or superposition operator) associated to $f$.

In the context of the general theory of SPDEs, the well-posedness of
this type of equation (for a given control $u(\cdot)$) is known under
very general conditions on the function $f$, see for instance
\cite{da1996ergodicity}
for a systematic exposition.
 Particular attention is payed
to the case when $f$ is decreasing, which corresponds to a dissipativity
assumption on the non-linear part of the drift, but it is otherwise
very general, in particular it does not satisfy any Lipschitz condition.
Most of the existing results were proved
with the primary aim of studying the asymptotic behavior of an uncontrolled SPDE.
In
\cite{brzezniak2013optimal} dissipativity assumptions are required
on the drift in connection with optimal control of an SPDE, with
the aim   to prove existence of relaxed optimal controls.
The analysis is often carried out by re-writing the SDPE as an
abstract evolution equation whose trajectories lie in the space
$H:= L^2(\mathcal{O})$ or, under appropriate conditions, in the space
$E := C(\mathcal{\bar{O}})$.

On the contrary all of the previously mentioned references on the stochastic maximum principle
require the function $f$ to be at least Lipschitz continuous.
It is our purpose to prove a version of the stochastic maximum principle
by imposing to the nonlinear term only dissipativity and
polynomial growth conditions, along with some smoothness property.
Even  for finite dimensional controlled stochastic equations, this seems
to be an issue that was considered only very recently: see
 \cite{orrieri2013stochastic}. On the other hand, we limit ourselves
 to the case where the noise is additive and uncontrolled.

In order to achieve gretaer generality we formulate our results
for an abstract stochastic controlled evolution equation
with additive noise in the Banach space $E$, of the form
\begin{equation*}
\begin{sistema}
dX(t) = \left[ AX(t) + F(X(t),u(t))\right] dt + BdW(t), \\
X(0) = x_0.
\end{sistema}
\end{equation*}
We assume that $A$ is a dissipative sectorial operator in $E$, and thus
 the  the generator of an analytic contraction semigroup of linear bounded
 operators $S(t)$, $t\ge 0$. We assume that  $S$ extends to a semigroup on
 $H$ satisfying in particular the regularizing property
$S(t)(H)\subset   E$ for $t >0$, as well as other assumptions.
The non-linear term $F: E \rightarrow E$ is of Nemytskii type and it is dissipative in $E$. The noise $W$ is a cylindrical Wiener process in another separable Hilbert space $K$ and $B: K \rightarrow H$ is a  linear bounded operator. Under suitable assumptions on   $B$ we can guarantee that the stochastic convolution
$\int_0^t S(t-s)BdW(s)$,
$t\in [0,T]$,
admits an $E$-continuous version and therefore by knonw results  (see e.g. \cite{da1992stochastic}) that the state equation admits a unique mild solution in $E$ for any admissible control.
We take special care in verifying that our abstract assumptions can be effectively
checked in concrete cases like equation \eqref{SPDEintro}.

In our context the stochastic maximum principle is the statement that to any optimal pair
$(u,X)$ we can associate a pair of adjoint processes $(p,q)$ in such a way that
 the Hamiltonian function, constructed by means of $u,X$ and $p$, satisfies a maximum (or minimum)
 condition, see inequality \eqref{eq.Hamiltoniana} below.
Our main results   are two versions of the stochastic maximum principle,
with slightly different assumptions. In the first one
the adjoint processes $(p,q)$ are defined and uniquely characterized using a duality argument.
In the second one it is shown that $(p,q)$ can also be characterized as the unique mild
solution to a backward SPDE (BSPDE). While it is natural to expect that the first
adjoint process $p$ takes values
in the dual space $E'$ of $E$, it seems difficult to prove directly well-posedness of a
BSPDE as an equation in $E'$, due to the lack of an efficient stochastic calculus
in this space.  To avoid this problem we formulate the BSPDE as an equation  in
 a bigger Hilbert space where we can use standard stochastic calculus and then we prove that the solution is indeed more regular. In fact, under suitable assumptions
 we prove that the first adjoint process $p$ actually
takes values in $H'$, with a possible blow up when $t \to T$.
 In the proof we make
 frequent use of duality arguments  in order to obtain a priori estimates.

We finally mention that developing the theory in the space $E$
of continuous functions allows  to consider fairly general cost functionals. For example we are able to formulate a control problem in which we
optimize the evolution of the state evaluated at fixed points of the space, see
\eqref{costo.integrato}.

The plan of the paper is as follows. After some preliminary results
on the well-posedness of the state equation and the formulation of the
optimal control problem in Section 2, we state our main results, Theorems \ref{thm_1} and
\ref{thm_2}, in
Section 3. Section 4 is devoted to some examples, where we show that
our general results can indeed be applied  to various concrete controlled SPDEs.
  In Section 5   we analyze the spike variation technique and in Section 6
  we prove  Theorem \ref{thm_1}. Finally, in Section 7 we study the BSPDE
  for the adjoint processes, which immediately leads to the proof of Theorem \ref{thm_2}.

\section{Notations, assumptions and preliminaries}

Let $\mathcal{O} \subset \R^d$ be an open bounded subset of $\R^d$ with boundary $\partial\mathcal{O}$ of class $C^2$. We denote by $H$ the Hilbert space $L^2(\mathcal{O},\R)$ with inner product $\braket{\cdot,\cdot}_{H}$ and by $E$ the Banach space $C(\bar{\mathcal{O}},\R)$ endowed with the supremum norm $\abs{\cdot}_E$. Moreover we denote by ${}_{E'}\langle \cdot, \cdot \rangle_{E}$ ( or simply $\braket{\cdot,\cdot}_{E}$) the duality pairing in $E' \times E$, where $E'$ is the topological dual of $E$, and by ${}_{H'}\braket{\cdot,\cdot}_H$ the duality between $H$ and $H'$.
Given a probability space $\spazio$, by a cylindrical Wiener process we mean a family of linear mappings from $K$ to $L^2(\Omega)$
\[ K \ni h \mapsto	W^h_t \in L^2(\Omega) \]
such that the two following conditions hold
\begin{enumerate}
\item $\left(W_t^h \right)_{t\geq 0}$ is a real continuous Wiener process, for every $ h \in K$;
\item $\E(W_t^h \cdot W_t^k) = \braket{h,k}_{K}$, for every $ h,k \in K$.
\end{enumerate}
\begin{remark}{\em  We can also think the cylindrical Wiener process as
\[ W(t) = \sum_{k=1}^{\infty}e_kW_k(t)\]
where ${e_k}$ is a complete orthonormal system of $K$ and ${W_k(t)}$ are mutually independent real Brownian motions defined on $\spazio$. It is worth noting that the series above does not converge in $K$, but in any Hilbert space $K_1 \supset K$ such that the embedding is Hilbert-Schmidt (see \cite{da1992stochastic} for a detailed exposure).
}
\end{remark}
We use the natural filtration $(\mathcal{F}_t)_{t\geq 0}$ associated to $W$, augmented in the usual way with the family of $\mP$-null sets of $\mathcal{F}$, and we denote by $\mathcal{P}$ the progressive $\sigma$-algebra on $\Omega \times [0,T]$, for some $T>0$. If $B$ is any Banach space, for any $p \geq 1$, $\lambda\in\R$ and $T>0$ we define
\begin{itemize}
\item $L^{p}_{\mathcal{F}_T}(\Omega; B)$, the set of all
$\mathcal{F}_T$-measurable random variables $Y$ with values in $B$ such that
\[ \norm{Y}_{L^p_{\mathcal{F}_T}(\Omega; B)}
= \left( \E \abs{Y}_{B}^{p} \right)^{1/p} < \infty; \]
\item $L^2_{\mathcal{F}}(\Omega\times [0,T]; B)$, the set of all $(\mathcal{F}_t)$-progressive
processes with values in $B$ such that
\[ \norm{X}_{L_{\mathcal{F}}^2(\Omega\times[0,T];B)} = \left( \E \int_0^T \abs{X(t)}_{B}^2 dt\right)^{1/2} < \infty; \]
\item $L^p_{\mathcal{F}}(\Omega; C([0,T]; B))$ the set of all $(\mathcal{F}_t)$-adapted
continuous processes with values in $B$ such that
\[ \norm{X}_{L^p_{\mathcal{F}}(\Omega; C([0,T]; B))} = \left( \E ( \sup_{t \in [0,T]}\abs{X}_B )^p \right)^{1/p} \]
\item $L^p_{\mathcal{F}}(\Omega; L^2([0,T],\lambda; B))$, the set of all $(\mathcal{F}_t)$-progressive processes with values in $B$ such that
\[ \norm{X}_{L^p_{\mathcal{F}}(\Omega; L^2([0,T],\lambda; B))} = \left( \E \left( \int_0^T \abs{X(t)}_{B}^2(T-t)^{\lambda}dt \right)^{p/2} \right)^{1/p} < \infty. \]
\end{itemize}
\begin{remark} {\em If $B$ is a Hilbert space then also the spaces $L^2_{\mathcal{F}}([0,T]; B)$ and $L^2_{\mathcal{F}}([0,T],\lambda; B)$ are Hilbert. Moreover,
it is easy to see that
\[ [L^2_{\mathcal{F}}([0,T],\lambda; B)]' = L^2_{\mathcal{F}}([0,T],-\lambda; B) \]
where the duality is given by $\braket{x,y} = \E\int_0^T \braket{x(t),y(t)}dt$.
}
\end{remark}
If $K,H$ are
Banach spaces we denote $\call(K,H)$ the space of linear bounded operators $T$
from $K$ to $H$, endowed with the usual operator norm $\|T\|_{\call(K,H)}$.
We set $\call(K):=\call(K,K)$.
When $K,H$ are Hilbert spaces
we write $\call_2(K,H)$ for
the space of Hilbert-Schmidt operators from $K$ to $H$, that is,
linear operators $T\in \call(K,H)$	such that
\[\|T\|^2_{\call_2(K,H)}= \sum_{k \in \mathbb{N}}\abs{Te_k}^2 <\infty \]
where $(e_k)_{k \in \mathbb{N}}$ is any orthonormal basis of $K$.  We set
$\call_2(K):=\call_2(K,K)$.

The space of control actions is a general separable metric space $U$ endowed with its Borel $\sigma$-algebra $\mathcal{B}(U)$. A control process is a progressive process $(u_t)_{t \in [0,T]}$ with values in $U$. We denote with $\mathcal{U}$ the space of admissible controls.

\subsection{First assumptions on the controlled state equation}

The aim of this work is to study a controlled stochastic partial differential equation of the form
\begin{equation}\label{pr.controllo}
\begin{sistema}
dX(t) = \left[ AX(t) + F(X(t),u(t))\right] dt + BdW(t) \\
X(0) = x,
\end{sistema}
\end{equation}			
and try to give some necessary conditions for the existence of a control process which minimize a cost functional of the following type
\begin{equation}\label{costo}
J(u) = \E \int_0^T L(t,X(t),u(t))dt + \E\, G(X(T)).
\end{equation}
A control process $\bar{u}$ for which \eqref{costo} attains its minimum is called optimal, i.e.
\begin{equation*}
J(\bar{u}) = \inf_{u(\cdot) \in \U} J(u(\cdot)).
\end{equation*}
$\bar{u}$, together with its corresponding trajectory $\bar{X}$, will be called
an optimal pair $(\bar{u},\bar{X})$.

Let us give some assumptions on the equation we are considering.
\begin{hp}\label{hp_eq}
\begin{enumerate}
\item $A: D(A) \subset E \rightarrow E$
is a sectorial operator in $E$, which
generates a contraction semigroup $S(t)$, $t \geq 0$. Moreover,
 $S(\cdot)$
  extends to a strongly continuous semigroup in $H$,
  still denoted with the same symbol. We assume that $S(t)(H)\subset  E$  for $t >0$, and for some constants $C\ge 0 $ and $\lambda \in [0, 1)$ it holds that
\begin{equation}\label{ipercontratt}
      \norm{S(t)}_{\call(H, E)} \leq \dfrac{C}{t^{\lambda}}, \quad t \in (0,T].
\end{equation}

\item $B \in \mathcal{L}(K,H)$, $S(t)B\in \call_2(K,H)$ for almost
 every $t\in [0,T]$,
 \begin{equation}\label{stochconvbenposta}
    \int_0^T\|S(t)B\|_{\call_2(K,H)}^2dt<\infty
 \end{equation}
 and the stochastic convolution \[W_A(t):= \int_0^t S(t-s)BdW(s),
 \qquad t\in [0,T],\]
admits a modification with  trajectories being continuous functions with values in $E$.
\item $x\in E$.
\end{enumerate}
\end{hp}
\begin{remark} {\em
\begin{enumerate}
\item  Since $A$ is sectorial, it generates an analytic semigroup $S(\cdot)$ of bounded
linear operators in $E$, non necessarily strongly continuous, and $D(A)$ may not
be dense in $E$ (see e.g. \cite{lunardi2012analytic}). We additionally
require that the semigroup is contractive, that is it satifies
$\norm{S(t)}_{\call(E)} \leq 1$ for every $t\ge 0$, and that
it has an extension as required above.

\item
The property \eqref{ipercontratt} is a form of
 ultracontractivity, with the additional quantitative
  requirement that $\lambda <1$.
We will use this property in Proposition \ref{prop.forward.duale}.
\item Under \eqref{stochconvbenposta}, for every $t\in[0,T]$, the stochastic integral
  $W_A(t)$ is well defined as an equivalence class of  random
variables with values in $H$. We require that there exists an $E$-valued
continuous stochastic process that, considered as a process in $H\supset E$,
is a modification of $W_A$.
\item All the results that follow admit easy generalizations to the case when
the drift $F$ is  time dependent or even stochastic. Similarly, the initial
condition $x\in E$ might be replaced by a random variable.
We keep the previous setting to simplify the notation.
			
\end{enumerate}
}\end{remark}
In order to state Theorems \ref{thm_1} and \ref{thm_2} below,
we need to require that the semigroup is indeed more regular. To do it, we formulate two different hypotheses
\begin{hp}\label{hp_V1}
There exists a Hilbert  space $V$, continuously and densely embedded in $E$,
 and a constant $\beta$ such that
\begin{equation*}
S(t)( V)\subset  V, \qquad \qquad \qquad \norm{S(t)}_{\call(V)} \leq \beta,
\qquad   t \in (0,T].
\end{equation*}
\end{hp}

\begin{hp}\label{hp_V2}
There exists a Hilbert  space $V$, continuously and densely embedded in $E$,
 and a constant $\beta$ such that
\begin{equation*}
S(t) (E) \subset  V, \qquad \qquad \qquad
\norm{S(t)}_{\call(V)} \leq \beta, \qquad   t \in (0,T].
\end{equation*}
\end{hp}

The second assumption is clearly more stringent. Note however that in both
cases the bound concerns the norm in $\call(V)$.
It will be clear later on (see Proposition \ref{prop.forward.duale} and Section $7$) which are the motivations for the introduction of the new space $V$.

Regarding the cost functional, we need the following
\begin{hp}\label{hp_cost}
\begin{enumerate}
\item The functionals $L: \Omega \times [0,T] \times E \times U \rightarrow \R$ and $G: \Omega \times E \rightarrow \R$ are measurable with respect to $\mathcal{P} \otimes \mathcal{B}(E)\otimes \mathcal{B}(U)$ and $\mathcal{B}(\R)$ (respectively $\mathcal{F}_T \otimes \mathcal{B}(E)$ and $\mathcal{B}(\R)$).
\item For each $(t,u) \in [0,T] \times U$, the operators $L$ and $G$ are Fr\'echet differentiable on $E$. Moreover there exist $K \ge 0$, $k \geq 0$ such that, $\mP$-a.s.
\[\abs{L(t,x,u)}+ \abs{D_x L(t,x,u)}_E \leq K(1 + \abs{x}_E^{k}),
\qquad
t \in [0,T],\,x\in E,\, u \in U,
\]
\[\abs{D_x G(x)}_E \leq K(1 + \abs{x}_E^{k}),
\qquad
x\in E.\]
\end{enumerate}
\end{hp}

\subsection{Assumptions on the drift as a Nemytskii operator}
Here we describe the non-linear term $F$ in  the state equation.
We suppose that a function $f: \R \times U \rightarrow \R$ is given
and that
$F: E \times U \rightarrow E$ is defined by
\[ F(x,u)(\xi) = f(x(\xi),u), \qquad \xi \in \mathcal{O}\]
for any continuous map $x: \bar{\mathcal{O}} \rightarrow \R$ and $u \in U$.
Thus, $F$ is the so-called
\textit{Nemytskii operator} associated to the real function $f$, on which
we make the following assumptions.

\begin{hp}\label{hp_f}
\begin{enumerate}
\item  $f: \R \times U \rightarrow \R$ is measurable.  For every $\sigma\in\R$ the
 map $u \mapsto f(\sigma, u) $ is continuous on $U$, and for every $u\in U$
the map $\sigma \mapsto f(\sigma,u)$ is continuously
 differentiable on $ \R$, with derivative denoted by $f'(\sigma,u)$. Moreover there exist
 $C\ge 0$, $k\ge 0$ such that
\begin{equation}\label{polygrowth}
\abs{f(\sigma,u)} +\abs{f'(\sigma,u)} \leq C(1 + \abs{\sigma}^k),
\qquad \sigma \in\R,\,u \in U.
\end{equation}
\item There exists $\beta\in\R$ such that
\[  f'(\sigma,u) \leq \beta,\qquad
\sigma \in\R,\,u \in U
. \]

\end{enumerate}
\end{hp}

If the map $f$ satisfies the above assumption, the Nemytskii operator $F$
 cannot be defined as a map from $H\times U$ into $H$.
 Even if it could be defined, under more restrictive growth assumptions on $f$, it would
 fail to be differentiable in general: indeed, it is known that
a Nemytskii operator $T: H \rightarrow H$ is Fr\'echet differentiable if and only if $T$ is linear (see \cite{ambrosetti1993primer}, page 20). However, as a map from $E\times U$ to $E$,
 $F$ is well defined and has some regularity properties, as shown in the following
\begin{lemma}\label{l.multiplication}
The functional $F:E\times U\to E$ is Fr\'echet differentiable on $E$.
Moreover the Fr\'echet differential $D_xF$ acts on every $h\in E$ as a multiplication operator,
 namely
\begin{equation}\label{eq.mult_op}
\left[D_xF(x,u)\cdot h\right](\xi) = f'(x(\xi),u)\cdot h(\xi), \qquad
\xi \in \mathcal{O}.
\end{equation}
\end{lemma}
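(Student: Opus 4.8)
The plan is to check the definition of Fréchet differentiability of $F$ with respect to the $E$-variable, the candidate differential $D_xF(x,u)$ being precisely the multiplication operator in \eqref{eq.mult_op}. First I would record that $F$ maps $E\times U$ into $E$: for fixed $u$ the map $\sigma\mapsto f(\sigma,u)$ is continuous by Hypothesis \ref{hp_f}, so $\xi\mapsto f(x(\xi),u)$ is a composition of continuous functions and hence belongs to $C(\bar{\mathcal O})=E$. By the same argument, since $\sigma\mapsto f'(\sigma,u)$ is continuous (it is the derivative of a $C^1$ function), for fixed $(x,u)$ the map $\xi\mapsto f'(x(\xi),u)$ is continuous, hence bounded on the compact set $\bar{\mathcal O}$; therefore $\Lambda_{x,u}h:=\bigl[\xi\mapsto f'(x(\xi),u)\,h(\xi)\bigr]$ defines a bounded linear operator on $E$ with $\norm{\Lambda_{x,u}}_{\call(E)}=\sup_{\xi\in\bar{\mathcal O}}\abs{f'(x(\xi),u)}\le C(1+\abs{x}_E^k)$ by \eqref{polygrowth}. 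This $\Lambda_{x,u}$ is the claimed value of $D_xF(x,u)$.

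Next I would estimate the increment. Fix $(x,u)$ and $h\in E$, and set
\[
\omega_{x,u}(h):=\sup_{\xi\in\bar{\mathcal O}}\int_0^1\abs{f'(x(\xi)+th(\xi),u)-f'(x(\xi),u)}\,dt.
\]
Applying the fundamental theorem of calculus to $\sigma\mapsto f(\sigma,u)$ pointwise in $\xi$,
\[
f(x(\xi)+h(\xi),u)-f(x(\xi),u)-f'(x(\xi),u)\,h(\xi)
=h(\xi)\int_0^1\bigl[f'(x(\xi)+th(\xi),u)-f'(x(\xi),u)\bigr]\,dt,
\]
so that, taking absolute values and the supremum over $\xi\in\bar{\mathcal O}$,
\[
\abs{F(x+h,u)-F(x,u)-\Lambda_{x,u}h}_E\le\abs{h}_E\,\omega_{x,u}(h).
\]
It therefore remains only to show $\omega_{x,u}(h)\to 0$ as $\abs{h}_E\to 0$.

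For the last step I would invoke uniform continuity. Assume $\abs{h}_E\le 1$; then all the arguments $x(\xi)$ and $x(\xi)+th(\xi)$, $\xi\in\bar{\mathcal O}$, $t\in[0,1]$, lie in the compact interval $[-R,R]$ with $R:=\abs{x}_E+1$. The continuous function $f'(\cdot,u)$ is uniformly continuous on $[-R,R]$, so given $\varepsilon>0$ there is $\delta\in(0,1]$ such that $\abs{f'(s,u)-f'(s',u)}\le\varepsilon$ whenever $s,s'\in[-R,R]$, $\abs{s-s'}\le\delta$. If $\abs{h}_E\le\delta$, then $\abs{t\,h(\xi)}\le\delta$ for all $\xi$ and $t$, hence the integrand defining $\omega_{x,u}(h)$ is bounded by $\varepsilon$ uniformly in $\xi$ and $t$, giving $\omega_{x,u}(h)\le\varepsilon$. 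This proves that $F(\cdot,u)$ is Fréchet differentiable at every $x\in E$ with differential $\Lambda_{x,u}$ as in \eqref{eq.mult_op}. The only point needing a little care is the uniformity in $\xi$ of all these estimates, which is immediate: $\bar{\mathcal O}$ is compact and the supremum norm on $E$ controls the increment $t\,h(\xi)$ simultaneously for all $\xi$. Note that no continuity of $f$ in the control variable $u$ is used here.
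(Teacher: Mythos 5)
Your proof is correct and follows essentially the same route as the paper's: write the increment via the fundamental theorem of calculus applied to $\sigma\mapsto f(\sigma,u)$ pointwise in $\xi$, and control the remainder by the uniform continuity of $f'(\cdot,u)$ on a compact interval containing the range of $x$. If anything, your version is slightly more careful, since you estimate the full increment $F(x+h,u)-F(x,u)-\Lambda_{x,u}h$ uniformly over all small $h\in E$ (the genuine Fr\'echet condition), whereas the paper's argument as written scales a fixed direction $h$ by $s\to 0$.
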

\begin{proof}
Fix $x,h\in E$, $u\in U$. For $\xi\in \mathcal{O}$, $s\neq 0$ we have
\begin{equation}
f(x(\xi) + sh(\xi),u) - f(x(\xi),u) = \int_0^1 f'(x(\xi)+\theta s h(\xi),u)sh(\xi)d\theta.
\end{equation}
Fix $\varepsilon >0$, let $\delta > 0$ be such that
$\abs{f'(x(\xi) + y,u)- f'(x(\xi),u)} < \varepsilon$, for $\abs{y} \leq \delta$. Such a $\delta$ exists since $f'(\cdot,u)$ is continuous on a compact set containing the image of $x(\cdot)$. Now choose $s$ small enough such that $\abs{\theta sh(\xi)} \leq \delta$. Taking the limit we obtain
\begin{equation*}
\begin{split}
&\sup_{\xi\in \mathcal{O}}\;\dfrac{1}{s}\abs{f(x(\xi) + sh(\xi),u) - f(x(\xi),u) - f'(x(\xi),u)\, sh(\xi)}\\
&=\sup_{\xi\in \mathcal{O}}\Big| \int_0^1[f'(x(\xi) + \theta s h(\xi),u) - f'(x(\xi),u)]d\theta \, h(\xi)\Big|\\
&\leq \varepsilon \sup_{\xi \in \mathcal{O}}\abs{h(\xi)}
\end{split}
\end{equation*}
which proves   \eqref{eq.mult_op}.
\end{proof}

To proceed further
we need to recall the following
\begin{defn}
A map $g:D(g)\subset E \rightarrow E$ is called
dissipative if for all $x,y \in D(g)$ and $\alpha \geq 0$ it holds
\[ \abs{x-y}_{E} \leq \abs{x-y-\alpha(g(x)-g(y))}_{E}. \]
Equivalently, if there exists $z^* \in \partial\abs{x-y}_E$ such that
\[ {}_{E}\braket{g(x)-g(y),z^*}_{E'}\leq 0 \qquad \forall \; x,y \in D(g).\]
Here we denote by $\partial\abs{\cdot}_E$ the sub-differential of the norm in $E$.

In the special case in which $E$ is a Hilbert space, this condition coincides with the monotonicity assumption $\braket{g(x)-g(y),x-y}_E\leq 0$, for all $x,y \in D(g)$.

\end{defn}
We refer e.g. to
\citep{cerrai2001second} for basic properties of dissipative mappings.
We also take from \citep{cerrai2001second}, page 180,
the following two results on the properties of the operator $F$
and its Yosida approximations.

\begin{lemma}\label{l.nemytskii}
Under Hypothesis \ref{hp_f}, if $F$ is the Nemytskii operator associated to $f$ then
there exists a constant $c \in \R$ such that the following are true:
\begin{itemize}
\item[(i)]  $F(\cdot,u) - cI$ is dissipative in $E$,
for every $u\in U$;
\item[(ii)] If $x \in E$ then $\sup_{u \in U}\norm{D_xF(x,u)}_{\mathcal{L}(E)} \leq c(1 + \abs{x}_E^k)$;
\item[(iii)] If $x,h \in E$ then there exists $\delta_h \in \partial\abs{h}_E$ such that $\sup_{u \in U} \braket{D_xF(x,u)h,\delta_h}_E \leq c\abs{h}_E$;
\item[(iv)] For $x,h \in E$ it holds $\sup_{u \in U}\braket{D_xF(x,u)h,h}_H \leq c\abs{h}_H^2$.
\end{itemize}
\end{lemma}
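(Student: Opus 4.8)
The plan is to verify the four statements essentially pointwise, exploiting the fact that $F$ is a Nemytskii operator and hence everything reduces to properties of the scalar function $f$ and its derivative $f'$, together with the structure of the subdifferential of the sup-norm. Set $c=\beta^+$ (or any constant $\ge \max\{\beta,0\}$ that also absorbs the polynomial-growth constant $C$ from \eqref{polygrowth}); we may always enlarge $c$ at the end. Throughout I will use Lemma \ref{l.multiplication}, which tells us that $D_xF(x,u)$ is the multiplication operator by the function $\xi\mapsto f'(x(\xi),u)$.

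First I would prove (i). For $x,y\in E$ and $\xi\in\mathcal O$, by the mean value theorem $f(x(\xi),u)-f(y(\xi),u)=f'(\eta(\xi),u)\,(x(\xi)-y(\xi))$ for some $\eta(\xi)$ between $x(\xi)$ and $y(\xi)$, and since $f'(\cdot,u)\le\beta$ we get, choosing a point $\xi_0$ where $|x-y|_E$ is attained and a $z^*\in\partial|x-y|_E$ supported there (i.e.\ $\langle g,z^*\rangle_E=\operatorname{sgn}(x(\xi_0)-y(\xi_0))\,g(\xi_0)$), that ${}_E\langle (F(x,u)-cx)-(F(y,u)-cy),z^*\rangle_{E'}=(f'(\eta(\xi_0),u)-c)(x(\xi_0)-y(\xi_0))\operatorname{sgn}(x(\xi_0)-y(\xi_0))\le 0$ because $f'(\eta(\xi_0),u)-c\le\beta-c\le0$. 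This gives dissipativity of $F(\cdot,u)-cI$ in the sense of the definition. Statement (ii) is immediate from \eqref{eq.mult_op}: $\norm{D_xF(x,u)}_{\call(E)}=\sup_{\xi}|f'(x(\xi),u)|\le C(1+\sup_\xi|x(\xi)|^k)=C(1+|x|_E^k)$ by the polynomial growth bound \eqref{polygrowth}, uniformly in $u$.

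For (iii) and (iv) I would argue similarly. Given $h\in E$, pick $\xi_0$ with $|h(\xi_0)|=|h|_E$ and let $\delta_h\in\partial|h|_E$ be the evaluation-type functional $\langle g,\delta_h\rangle_E=\operatorname{sgn}(h(\xi_0))\,g(\xi_0)$; then $\langle D_xF(x,u)h,\delta_h\rangle_E=f'(x(\xi_0),u)\,h(\xi_0)\operatorname{sgn}(h(\xi_0))=f'(x(\xi_0),u)\,|h|_E\le \beta|h|_E\le c|h|_E$, uniformly in $u$. For (iv), $\langle D_xF(x,u)h,h\rangle_H=\int_{\mathcal O}f'(x(\xi),u)\,h(\xi)^2\,d\xi\le\beta\int_{\mathcal O}h(\xi)^2\,d\xi=\beta|h|_H^2\le c|h|_H^2$, again uniformly in $u$. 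Each of these is just the one-sided bound $f'\le\beta$ integrated (or evaluated) against a nonnegative weight.

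The only genuinely delicate point is making the subdifferential arguments in (i) and (iii) precise: $\partial|\cdot|_E$ for $E=C(\bar{\mathcal O})$ consists of (signed, norm-one) Radon measures concentrated on the set where the supremum is attained, and one must check the claimed Dirac-type functional is indeed an admissible element of $\partial|h|_E$ and that, in (i), the supporting set for $x-y$ may be taken to be a single point. Since the functions involved are continuous on the compact set $\bar{\mathcal O}$, the maximum is attained and this is standard, but it is where care is needed; everything else is an elementary consequence of Hypothesis \ref{hp_f} and Lemma \ref{l.multiplication}. I would remark that the same reasoning appears in \citep{cerrai2001second} and simply cite it for the Yosida-approximation companion statement.
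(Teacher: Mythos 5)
Your argument is correct, but note that the paper does not actually prove this lemma: it is quoted verbatim from \citep{cerrai2001second} (page 180), so there is no in-paper proof to compare against. Your pointwise verification is the standard one and fills in the details soundly: (ii) and (iv) are immediate from the multiplication-operator form \eqref{eq.mult_op} together with \eqref{polygrowth} and the one-sided bound $f'\le\beta$ integrated against the nonnegative weight $h(\xi)^2$; and in (i) and (iii) the choice of the signed Dirac functional $\operatorname{sgn}(h(\xi_0))\,\delta_{\xi_0}$ at a point $\xi_0$ where $\abs{h(\xi_0)}=\abs{h}_E$ is legitimate, since such a functional has norm one and pairs with $h$ to give $\abs{h}_E$, which is exactly the characterization of $\partial\abs{h}_E$ in $C(\bar{\mathcal O})$ (the maximum being attained because $\bar{\mathcal O}$ is compact). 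Two small points of care: in (i) the mean value theorem only produces a point $\eta(\xi)$ between $x(\xi)$ and $y(\xi)$ with no regularity in $\xi$, but this is harmless since you only evaluate at the single point $\xi_0$; and the degenerate cases $x=y$ in (i) and $h=0$ in (iii) should be dismissed explicitly (the inequalities are trivial there, with any element of the subdifferential, which at $0$ is the whole closed unit ball of $E'$). Taking $c=\max\{\beta,C,0\}$ makes all four items hold with a single constant, as the statement requires.
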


\begin{lemma}\label{l.yosida}
For any $\alpha >0$, $\sigma\in\R$ and $u \in U$,
consider the resolvent map
$J_{\alpha}(\sigma,u) = (I-\alpha f)^{-1}(\sigma,u)$ and define
$f_{\alpha}(\sigma,u) = f(J_{\alpha}(\sigma,u),u) - cJ_{\alpha}(\sigma,u)$. If we denote $F_{\alpha}(x,u)(\xi) = f_{\alpha}(x(\xi),u)$, then $F_{\alpha}$ is dissipative and Lipschitz-continuous both in $H$ and in $E$. Moreover
\begin{equation*}
[DF_{\alpha}(x,u)y](\xi) = f_{\alpha}'(x(\xi),u)\cdot y(\xi), \qquad \xi \in \mathcal{O}, u \in U.
\end{equation*}
and for any $R >0$ we have
\begin{equation*}
\lim_{\alpha \rightarrow 0} \sup_{\abs{x}_E \leq R} \abs{F_\alpha(x,u) - F(x,u)}_E = 0, \qquad u \in U.
\end{equation*}
\end{lemma}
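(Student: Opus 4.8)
The plan is to reduce the whole statement to the one-dimensional analysis of the scalar map $\sigma\mapsto f_\alpha(\sigma,u)$, since $F_\alpha(\cdot,u)$ is again a Nemytskii operator and every asserted property is inherited pointwise from its generating function. Fix $u\in U$ and put $\phi_u(\sigma):=\sigma-\alpha f(\sigma,u)$. By Hypothesis \ref{hp_f}, $\phi_u\in C^1(\R)$ with $\phi_u'(\sigma)=1-\alpha f'(\sigma,u)\ge 1-\alpha\beta$, which is positive for $\alpha\beta<1$; since the construction is used only in the limit $\alpha\to 0$ it is harmless to restrict to such small $\alpha$ (equivalently, one may read $J_\alpha$ as the resolvent of the dissipative part $f(\cdot,u)-c(\cdot)$, which is defined for every $\alpha>0$). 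Integrating the lower bound on $\phi_u'$ shows that $\phi_u$ is an increasing $C^1$-bijection of $\R$, so $j_\alpha(\cdot,u):=(I-\alpha f(\cdot,u))^{-1}$ is a well-defined $C^1$ map with $j_\alpha'(\tau,u)=\bigl[1-\alpha f'(j_\alpha(\tau,u),u)\bigr]^{-1}$; continuity of $u\mapsto j_\alpha(\sigma,u)$ follows from Hypothesis \ref{hp_f}(1) and the uniqueness in the defining equation, by a compactness argument. Hence $J_\alpha(x,u)(\xi)=j_\alpha(x(\xi),u)$ maps $E\times U$ into $E$, and $f_\alpha(\cdot,u)=f(j_\alpha(\cdot,u),u)-c\,j_\alpha(\cdot,u)$ is $C^1$, with $f_\alpha'(\sigma,u)=\bigl(f'(j_\alpha(\sigma,u),u)-c\bigr)\bigl(1-\alpha f'(j_\alpha(\sigma,u),u)\bigr)^{-1}$.

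By Hypothesis \ref{hp_f}(2) we have $f'(\sigma,u)\le\beta$, and the constant $c$ of Lemma \ref{l.nemytskii} may be taken $\ge\beta$; so $\gamma:=f'(j_\alpha(\sigma,u),u)\le\beta\le c$, whence the numerator in the formula for $f_\alpha'$ is $\le 0$ and the denominator is $\ge 1-\alpha\beta>0$: therefore $f_\alpha'(\sigma,u)\le 0$, uniformly in $\sigma$ and $u$. An elementary estimate --- the function $\gamma\mapsto(c-\gamma)/(1-\alpha\gamma)$ is decreasing for $\alpha c<1$, with supremum $1/\alpha$ --- gives moreover $\abs{f_\alpha'(\sigma,u)}<1/\alpha$. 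Consequently $F_\alpha(\cdot,u)$, being the Nemytskii operator of a non-increasing, $(1/\alpha)$-Lipschitz scalar function, is $(1/\alpha)$-Lipschitz both in $\abs{\cdot}_E$ (take the supremum over $\xi$) and in $\abs{\cdot}_H$ (integrate over $\mathcal{O}$); it is dissipative in $E$ (test $F_\alpha(x,u)-F_\alpha(y,u)$ against $\mathrm{sign}(x(\xi_0)-y(\xi_0))\,\delta_{\xi_0}\in\partial\abs{x-y}_E$, where $\xi_0$ realizes $\abs{x-y}_E$) and in $H$ (the integrand $\bigl(f_\alpha(x(\xi),u)-f_\alpha(y(\xi),u)\bigr)\bigl(x(\xi)-y(\xi)\bigr)$ is pointwise $\le 0$); and the linear growth of $f_\alpha(\cdot,u)$ makes $F_\alpha(\cdot,u)$ well-defined from $H$ to $H$ as well. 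For the differential, $f_\alpha$ itself satisfies Hypothesis \ref{hp_f} (it is measurable, continuous in $u$, and $C^1$ in $\sigma$ with bounded --- hence polynomially bounded --- derivative satisfying $f_\alpha'\le 0$), so Lemma \ref{l.multiplication} applied with $f_\alpha$ in place of $f$ gives at once that $F_\alpha(\cdot,u)$ is Fr\'echet differentiable on $E$, with $[DF_\alpha(x,u)y](\xi)=f_\alpha'(x(\xi),u)\,y(\xi)$.

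For the convergence on balls, fix $R>0$ and $u\in U$. From $\phi_u'\ge 1-\alpha\beta$ one obtains the a priori bound $(1-\alpha\beta)\abs{j_\alpha(\sigma,u)}\le\abs{\sigma}+\alpha\abs{f(0,u)}$, hence $\abs{j_\alpha(\sigma,u)}\le R+1$ whenever $\abs{\sigma}\le R$ and $\alpha$ is small (depending on $R,u$); by \eqref{polygrowth} this bounds $\abs{f(j_\alpha(\sigma,u),u)}$ by a constant $M=M(R,u)$, and then the identity $j_\alpha(\sigma,u)-\sigma=\alpha f(j_\alpha(\sigma,u),u)$ gives $\sup_{\abs{\sigma}\le R}\abs{j_\alpha(\sigma,u)-\sigma}\le\alpha M\to 0$. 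Since $f(\cdot,u)$ is uniformly continuous on the compact interval $[-(R+1),R+1]$, it follows that $\sup_{\abs{\sigma}\le R}\abs{f_\alpha(\sigma,u)-f(\sigma,u)+c\sigma}\to 0$, i.e.\ $\sup_{\abs{x}_E\le R}\abs{F_\alpha(x,u)-F(x,u)}_E\to 0$ once $F$ is understood as the dissipative operator $x\mapsto f(x(\cdot),u)-c\,x(\cdot)$ which actually enters the approximating state equations (for the bare Nemytskii operator of $f$, the limit is instead $F-cI$).

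The argument has no deep point; it is only the interplay between the sign condition $f'\le c$ and the scalar resolvent. What is worth recording for the sequel is that the Lipschitz constants of $F_\alpha$ degenerate like $1/\alpha$, so $F_\alpha$ must not be treated as a uniformly Lipschitz perturbation: what survives the limit $\alpha\to 0$ is its dissipativity, together with the $\alpha$-uniform directional bounds on $DF_\alpha$ inherited from Lemma \ref{l.nemytskii}(iii)--(iv). A minor technical nuisance --- the domain of definition of $J_\alpha$ when $\beta>0$ --- is sidestepped by taking $\alpha$ small, at no cost since $\alpha\to 0$.
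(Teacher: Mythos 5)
The paper does not actually prove this lemma: it is quoted from \citep{cerrai2001second}, page 180, so there is no internal argument to compare against. Your self-contained proof is correct, and it is the natural one: everything is reduced to the scalar resolvent $j_\alpha(\cdot,u)=(I-\alpha f(\cdot,u))^{-1}$, whose existence and regularity follow from $\phi_u'=1-\alpha f'\geq 1-\alpha\beta>0$, and the two computations that carry the weight check out --- namely $f_\alpha'=(f'(j_\alpha)-c)\,(1-\alpha f'(j_\alpha))^{-1}\in(-1/\alpha,0]$ for $\alpha c<1$ (using $f'\leq\beta\leq c$ and the monotonicity of $\gamma\mapsto(c-\gamma)/(1-\alpha\gamma)$), which gives dissipativity and the Lipschitz bound in both $E$ and $H$ at once through the Nemytskii structure, and the a priori bound $(1-\alpha\beta)\abs{j_\alpha(\sigma,u)}\leq\abs{\sigma}+\alpha\abs{f(0,u)}$ combined with $j_\alpha(\sigma,u)-\sigma=\alpha f(j_\alpha(\sigma,u),u)$ and \eqref{polygrowth}, which gives the uniform convergence on balls; the differential formula is then just Lemma \ref{l.multiplication} applied to $f_\alpha$. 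Your two caveats are genuine features of the statement rather than gaps in your argument, and are worth recording: when $\beta>0$ the map $I-\alpha f(\cdot,u)$ is a bijection of $\R$ only for $\alpha\beta<1$, so ``for any $\alpha>0$'' must be read either with $\alpha$ small (harmless, since only $\alpha\to 0$ is used) or with $J_\alpha$ understood as the resolvent of the dissipative part $f-c\,\mathrm{id}$, as in \citep{cerrai2001second}; and with the literal definition $f_\alpha=f(J_\alpha)-cJ_\alpha$ the uniform limit on balls is $F-cI$ rather than $F$, reflecting the decomposition $F=(F-cI)+cI$ under which the Yosida approximation is taken of the dissipative part only. The second point is not merely cosmetic, since the constant $c$ must be accounted for consistently wherever $F_\alpha$ and $D_xF_\alpha$ are used as approximations of $F$ and $D_xF$ (e.g.\ in the convergence $T_{\alpha_k}(s)\to T(s)$ in the proof of Theorem \ref{t.ex_uniq_BSPDE}).
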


\begin{remark}{\em
Let us notice that the operators $F_{\alpha}$ defined above coincide with the usual Yosida approximations of $F$. We also note that from \eqref{polygrowth}
it follows that
\begin{equation}\label{polygrowthdue}
\abs{f_\alpha(\sigma,u)} +\abs{f_\alpha'(\sigma,u)} \leq C(1 + \abs{\sigma}^k),
\qquad \sigma \in\R,\,u \in U.
\end{equation}
for some $C>0$ independent of $\alpha$.
}
\end{remark}
\subsection{The state equation}
Now we are in position to study the abstract form of the state equation
\eqref{pr.controllo}. While \eqref{pr.controllo} is merely a formal
writing, the precise formulation of the state equation is a so-called
 \textit{mild formulation}: an $E$-valued continuous
 adapted process $X(t)$ is a \textit{mild solution} of the SPDE above if,  $\mP$-a.s.,
\[ X(t) = S(t)x + \int_0^t S(t-s)F(X(s),u(s))ds + W_A(t),
\qquad t \in [0,T], \]
where $W_A$ is the $E$-continuous modification of $\int_0^t S(t-s)BdW(s)$, $t\in [0,T]$.
Under the previous assumptions  we can state the following theorem (compare
\cite{da1992stochastic}).
\begin{thm}
Assume that hypotheses \ref{hp_eq} and \ref{hp_f} hold true, then equation \eqref{pr.controllo} admits a unique mild solution $X$. Moreover
$X\in L^p_{\mathcal{F}}(\Omega; C([0,T]; E))$
for every  $p\geq 1$.
\end{thm}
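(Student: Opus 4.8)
The plan is to prove existence and uniqueness of the mild solution by a fixed-point argument applied to the Yosida-regularized equations, followed by a passage to the limit. First I would fix an admissible control $u(\cdot)$ and set $Y(t) := X(t) - W_A(t)$, so that $Y$ must solve the random integral equation
\[
Y(t) = S(t)x + \int_0^t S(t-s)F(Y(s) + W_A(s), u(s))\,ds, \qquad t \in [0,T].
\]
Since $W_A$ has $\mP$-a.s.\ continuous $E$-valued trajectories (Hypothesis \ref{hp_eq}) and $x \in E$, the function $t \mapsto S(t)x + \int_0^t S(t-s)(\cdot)\,ds$ maps $E$-continuous paths to $E$-continuous paths, using that $S$ is an analytic contraction semigroup on $E$ (so $\|S(t)\|_{\call(E)} \le 1$) and that $F(\cdot,u): E \to E$ is continuous by Lemma \ref{l.multiplication}. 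The difficulty is that $F$ is only locally Lipschitz (polynomial growth, Lemma \ref{l.nemytskii}(ii)), so a naive contraction argument fails; this is where dissipativity enters.

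The key step is to work pathwise with the Yosida approximations $F_\alpha$ from Lemma \ref{l.yosida}, which are globally Lipschitz and dissipative in $E$. For each $\alpha > 0$, a standard Banach fixed-point argument on $C([0,T];E)$ (iterated on small time intervals, or with an exponentially weighted sup-norm) produces a unique solution $Y_\alpha$; adaptedness is inherited since $W_A$ is adapted and the Picard iterates are adapted. To pass to the limit $\alpha \to 0$ I would derive an a priori bound on $\sup_{t \le T}|Y_\alpha(t)|_E$ independent of $\alpha$: writing $F_\alpha(y,u) = [F_\alpha(y,u) - cI y] + cy$ with $F_\alpha - cI$ dissipative, and using the dissipativity inequality together with the variation-of-constants formula and $\|S(t)\|_{\call(E)} \le 1$, one gets a Gronwall-type estimate controlling $|Y_\alpha(t)|_E$ in terms of $|x|_E$, $\sup_{s \le t}|W_A(s)|_E$ and the polynomial growth constant. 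Once $Y_\alpha$ is bounded uniformly in $\alpha$ (on a fixed $\omega$), the last assertion of Lemma \ref{l.yosida}, namely $\sup_{|y|_E \le R}|F_\alpha(y,u) - F(y,u)|_E \to 0$, lets me show $(Y_\alpha)$ is Cauchy in $C([0,T];E)$: subtracting the equations for $Y_\alpha$ and $Y_{\alpha'}$, splitting $F_\alpha(Y_\alpha) - F_{\alpha'}(Y_{\alpha'})$ into a dissipative difference plus the approximation errors, and applying Gronwall once more. The limit $Y$ solves the equation with $F$ itself, and $X := Y + W_A$ is the desired mild solution.

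For uniqueness, given two mild solutions $X_1, X_2$, their difference $D = X_1 - X_2$ satisfies $D(t) = \int_0^t S(t-s)[F(X_1(s),u(s)) - F(X_2(s),u(s))]\,ds$; on the random time interval where both solutions stay in a ball of radius $R$, $F(\cdot,u)$ is Lipschitz with constant depending on $R$ (Lemma \ref{l.nemytskii}(ii)), and Gronwall forces $D \equiv 0$ up to that time, and then on all of $[0,T]$ by continuity and a standard continuation argument. Finally, for the $L^p$-integrability, I would revisit the a priori bound: the pathwise estimate gives
\[
\sup_{t \le T}|X(t)|_E \le C\Bigl(1 + |x|_E + \sup_{t \le T}|W_A(t)|_E\Bigr)^{m}
\]
for some $m$ depending on the growth exponent $k$ and $T$, after iterating the Gronwall inequality to absorb the polynomial nonlinearity; since $W_A \in L^p_{\mathcal F}(\Omega; C([0,T];E))$ for every $p \ge 1$ (this is the standard regularity of Gaussian stochastic convolutions, and $\sup_t |W_A(t)|_E$ has moments of all orders by Fernique's theorem), raising to the $p$-th power and taking expectations yields $X \in L^p_{\mathcal F}(\Omega; C([0,T];E))$ for all $p \ge 1$. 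The main obstacle is the a priori estimate absorbing the polynomial-growth nonlinearity via dissipativity alone — getting a bound that is polynomial (not exponential) in $\sup_t|W_A(t)|_E$ so that all moments survive — and this is exactly the point where the dissipativity of $F$, rather than mere local Lipschitz continuity, is essential.
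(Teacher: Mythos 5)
The paper does not prove this theorem itself --- it only points to the standard reference (Da Prato--Zabczyk) --- and your argument is precisely the standard one from that literature: subtract the stochastic convolution to reduce to a pathwise deterministic equation, solve the Yosida-regularized problems, obtain $\alpha$-uniform a priori bounds from dissipativity via the subdifferential of the $E$-norm and Gronwall, pass to the limit using the local uniform convergence $F_\alpha\to F$, and recover the $L^p$ moment bounds from the polynomial (not exponential) dependence of the pathwise estimate on $\sup_t|W_A(t)|_E$ together with Fernique. This is also exactly the toolkit the paper itself deploys in Lemma \ref{l.estimates.spike} and Proposition \ref{prop.forward.duale}, so your proposal is correct and consistent with the intended proof; the only point you share with the paper in glossing over is the continuity of $t\mapsto S(t)x$ at $t=0$ when $D(A)$ is not dense in $E$.
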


\section{Statement of the main results}

Let $V'$ be the dual of the Hilbert space $V$ introduced in  Hypothesis \ref{hp_V1}. Let us identify $H$
with $H'$ by the Riesz isometry. Then we obtain the following continous dense inclusions
\[ V \subset E \subset H \subset E' \subset V'.\]
 Now we can state the two building blocks of our main result.
\begin{thm}\label{thm_1}
Suppose Hypotheses \ref{hp_eq}, \ref{hp_V1}, \ref{hp_cost} and \ref{hp_f} hold, let
 $  r' \in (1, 2)$ be fixed arbitrarily and let $\lambda$ be as in Hypothesis \ref{hp_eq}.
Suppose that $(u,X)$ is an optimal pair.
Then there exists a progressive process $p$
 with values in $H'$ satisfying
 \[ \E \left( \int_0^T\abs{p(t)}^2_{H'}(T-t)^{\lambda}dt\right)^{r'} <\infty\]
and  for which the following inequality holds $\mP$-a.s. for almost every $t \in [0,T]$:
\begin{equation}\label{eq.Hamiltoniana}
\mathcal{H}(t,v,X(t),p(t)) - \mathcal{H}(t,u(t),X(t),p(t)) \geq 0, \qquad \text{ for every } v \in U,
\end{equation}
where $\mathcal{H}(t,u,x,p) := L(t,x,u) + {}_{H'}\braket{p,F(x,u)}_{H}$ is the Hamiltonian
function of the system.
\end{thm}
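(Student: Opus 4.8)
The plan is to follow the classical spike-variation scheme of Peng, adapted to the present infinite-dimensional dissipative setting. First I would fix the optimal pair $(u,X)$ and, for a point $t_0\in[0,T)$ of "Lebesgue type" for the relevant processes and a fixed control action $v\in U$, introduce the spike variation $u^\varepsilon$ which equals $v$ on $[t_0,t_0+\varepsilon]$ and equals $u$ elsewhere. Let $X^\varepsilon$ be the corresponding mild solution. The first analytic step is to derive the first-order expansion $X^\varepsilon = X + Y^\varepsilon + o(\varepsilon)$ in an appropriate $L^p_{\mathcal{F}}(\Omega;C([0,T];E))$ sense, where $Y^\varepsilon$ solves the linearized mild equation
\begin{equation*}
Y^\varepsilon(t) = \int_0^t S(t-s)\,D_xF(X(s),u(s))\,Y^\varepsilon(s)\,ds + \int_0^t S(t-s)\,\big[F(X(s),u^\varepsilon(s)) - F(X(s),u(s))\big]\,ds .
\end{equation*}
Here the dissipativity estimates of Lemma \ref{l.nemytskii}, in particular the bound (iii) on $\braket{D_xF(x,u)h,\delta_h}_E$, are what let me close the a priori estimates for $Y^\varepsilon$ in $E$ despite the merely polynomial growth of $f$; the Yosida approximations $F_\alpha$ of Lemma \ref{l.yosida} provide the approximation device to make these estimates rigorous. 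The key quantitative fact is $\norm{Y^\varepsilon}_{\ldots} = O(\sqrt{\varepsilon})$ or $O(\varepsilon)$ depending on the norm, and that the difference term contributes, to leading order, $\varepsilon\, S(t-t_0)[F(X(t_0),v)-F(X(t_0),u(t_0))]$.

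Next I would expand the cost: using Hypothesis \ref{hp_cost} and the Fréchet differentiability of $L,G$,
\begin{equation*}
0 \leq J(u^\varepsilon) - J(u) = \E\int_0^T {}_{E'}\braket{D_xL(t,X(t),u(t)),Y^\varepsilon(t)}_E\,dt + \E\,{}_{E'}\braket{D_xG(X(T)),Y^\varepsilon(T)}_E + \E\int_{t_0}^{t_0+\varepsilon}\big[L(t,X(t),v) - L(t,X(t),u(t))\big]\,dt + o(\varepsilon),
\end{equation*}
the polynomial growth of $D_xL,D_xG$ being controlled by the moment bounds on $X$ from the state-equation theorem. At this point the duality argument enters: I want to introduce the adjoint process $p$ so that the first two terms on the right, which are linear functionals of $Y^\varepsilon$, get rewritten as a single term involving $F(X(t_0),v)-F(X(t_0),u(t_0))$. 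Concretely, $p$ should be characterized as the unique element of the weighted space $L^{r'}_{\mathcal{F}}(\Omega;L^2([0,T],\lambda;H'))$ (or rather its appropriate version) representing, via the duality pairing $\braket{x,y}=\E\int_0^T\braket{x(t),y(t)}dt$, the bounded linear functional on the space where $Y^\varepsilon$ lives that is obtained by composing "solve the linearized forward equation with a given source" with "evaluate $\E\int D_xL\cdot(\cdot) + \E\, D_xG\cdot(\cdot)_{T}$". This is exactly the content of the forward–backward duality; the regularity statement $\E(\int_0^T\abs{p(t)}^2_{H'}(T-t)^\lambda dt)^{r'}<\infty$, with the blow-up weight $(T-t)^\lambda$, comes from the ultracontractivity bound \eqref{ipercontratt} of Hypothesis \ref{hp_eq} together with Hypothesis \ref{hp_V1}, applied to the backward problem — this is presumably what Proposition \ref{prop.forward.duale} delivers. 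Plugging $p$ in converts the inequality into
\begin{equation*}
0 \leq \E\int_{t_0}^{t_0+\varepsilon}\Big[ L(t,X(t),v) - L(t,X(t),u(t)) + {}_{H'}\braket{p(t),F(X(t),v) - F(X(t),u(t))}_H \Big]\,dt + o(\varepsilon).
\end{equation*}
Dividing by $\varepsilon$, letting $\varepsilon\to0$ along a Lebesgue point of the integrand, and using continuity of $f(\sigma,\cdot)$ from Hypothesis \ref{hp_f} to handle the $U$-dependence, yields \eqref{eq.Hamiltoniana} at $t_0$ for the fixed $v$; a separability argument in $v\in U$ and $t_0\in[0,T]$ upgrades this to the "$\mP$-a.s., a.e.\ $t$, for every $v$" statement.

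The main obstacle, and where I expect the real work to lie, is the combination of two difficulties that do not appear in the Lipschitz theory. First, the linearized equation has a random, unbounded (polynomially growing in $\abs{X}_E$) coefficient $D_xF(X(s),u(s))$, so the variation estimates must be carried out carefully through the Yosida approximations, exploiting that the one-sided bound $f'\le\beta$ (Hypothesis \ref{hp_f}(2)) survives approximation and gives genuine dissipativity rather than mere boundedness — this is what prevents an exponential blow-up of constants. Second, the natural state space for $p$ is $E'$, which has no workable stochastic calculus; the remedy here (anticipated in the introduction) is to realize $p$ only through the duality pairing and to extract its $H'$-valued, weighted-integrable regularity a posteriori via duality estimates against the forward equation, rather than solving a BSPDE directly. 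Making the duality pairing well defined — i.e.\ checking that the functional $Y^\varepsilon\mapsto \E\int D_xL\cdot Y^\varepsilon + \E\, D_xG(X(T))\cdot Y^\varepsilon(T)$ is continuous on a space large enough to contain all the $Y^\varepsilon$ uniformly, yet small enough that its dual is the weighted $H'$-space — is the technical crux, and it is precisely where the parameters $r'\in(1,2)$ and $\lambda\in[0,1)$ get tuned.
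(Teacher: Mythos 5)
Your plan coincides with the paper's proof: the same spike-variation expansion (Lemma \ref{l.estimates.spike} and Proposition \ref{prop.cost.expansion}), the same definition of $p$ as a component of the adjoint $\tau^*$ of the solution map of the linearized forward equation \eqref{eq.y}, with the weighted $H'$-regularity of $p$ obtained exactly as you describe from the ultracontractivity bound \eqref{ipercontratt} via Proposition \ref{prop.forward.duale}, and the same Lebesgue-point conclusion. You have correctly identified the two technical cruxes (dissipativity/Yosida approximation for the variation estimates, and the choice of the weighted dual spaces tuned by $r'$ and $\lambda$), so this is essentially the paper's argument.
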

We note that in Theorem \ref{thm_1} no
characterization of $p$ is given. It will be clear from the proof  that $p$ is   the first component of a pair $(p,q)$ which is uniquely determined by a duality argument. In
  Theorem \ref{thm_2} we give a more precise characterization of the pair $(p,q)$ as the solution of an adjoint  equation, which is backward in time.
\begin{thm}\label{thm_2}
Let $(u,X)$ be an optimal pair. If Hypotheses \ref{hp_eq}, \ref{hp_V2}, \ref{hp_cost} and \ref{hp_f} hold and $  r' \in (1, 2)$
 then there exists a pair of progressive processes $(p,q)$ with values in $H'\times L_2(K,V')$
 for which inequality \eqref{eq.Hamiltoniana} holds. Moreover
\[ \E \left( \int_0^T\abs{p(t)}^2_{H'}(T-t)^{\lambda}dt\right)^{r'} + \E\left(\int_0^T\abs{q(t)}^2_{V'}dt\right)^{r'} <\infty.\]
and  the pair $(p,q)$   is the unique mild solution of the Backward SPDE \eqref{eq.backward} below.
\end{thm}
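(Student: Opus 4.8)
The plan is to deduce Theorem~\ref{thm_2} by combining the duality characterization of $p$ from Theorem~\ref{thm_1} with a well-posedness theory for the adjoint BSPDE, and then identifying the two objects. Concretely, the first step is to write down the adjoint equation: formally differentiating the cost and the state dynamics along a spike variation, one is led to the linear backward SPDE (this is equation~\eqref{eq.backward} referred to in the statement)
\begin{equation*}
\begin{sistema}
-dp(t) = \left[A^* p(t) + D_xF(X(t),u(t))^* p(t) + D_xL(t,X(t),u(t))\right]dt - q(t)\,dW(t),\\
p(T) = D_xG(X(T)),
\end{sistema}
\end{equation*}
where $A^*$ is the adjoint in the Gelfand triple, and $D_xF(X,u)^*$ is again a multiplication operator by $f'(X(\cdot),u)$ (Lemma~\ref{l.multiplication}). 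The second step is to establish existence and uniqueness of a mild solution to this BSPDE. Since one cannot work directly in $E'$, I would set up the equation in the larger Hilbert space $V'$, using the triple $V\subset E\subset H\subset E'\subset V'$ and the fact that $S(t)^*$ is a contraction (or bounded by $\beta$) on $V'$ by duality from Hypothesis~\ref{hp_V2}; standard $L^2$-theory for linear BSPDEs in Hilbert spaces (backward stochastic convolution, fixed point in the weighted space $L^2_{\mathcal{F}}([0,T],\lambda;V')\times L^2_{\mathcal{F}}(\Omega\times[0,T];L_2(K,V'))$) then yields a unique solution $(p,q)$, the terminal datum $D_xG(X(T))\in L^2_{\mathcal{F}_T}(\Omega;E')\subset L^2_{\mathcal{F}_T}(\Omega;V')$ being controlled by Hypothesis~\ref{hp_cost}. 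The weight $(T-t)^\lambda$ is forced by the $t^{-\lambda}$ singularity of $\|S(t)\|_{\mathcal{L}(H,E)}$ in Hypothesis~\ref{hp_eq}, which dualizes to a singularity of $\|S(T-t)^*\|_{\mathcal{L}(E',H')}$ appearing when one estimates the convolution of the forcing term $D_xL$.

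The third step is the regularity bootstrap: the a~priori $V'$-estimates must be upgraded to show that $p$ in fact takes values in $H'$ with the stated integrability $\E(\int_0^T|p(t)|_{H'}^2(T-t)^\lambda dt)^{r'}<\infty$. Here I would exploit Hypothesis~\ref{hp_V2} more seriously: $S(t)(E)\subset V$ with uniform bound means $S(t)^*$ maps $V'$ into $E'$ and, combined with \eqref{ipercontratt}, that $S(t)^*$ maps $H'$ (hence also $E'$) nicely, so in the mild representation $p(t)=S(T-t)^*D_xG(X(T))+\int_t^T S(s-t)^*[\cdots]ds - \int_t^T S(s-t)^*q(s)\,dW(s)$ each term can be re-estimated in the finer norm $|\cdot|_{H'}$, picking up at worst the weight $(T-t)^\lambda$; the exponent $r'\in(1,2)$ (rather than $2$) enters because the forcing terms $D_xL$, $D_xF^*p$ grow polynomially in $|X|_E$ and $X\in L^p_{\mathcal{F}}(\Omega;C([0,T];E))$ for all $p$, so Hölder in $\omega$ forces a loss of one integrability power. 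The fourth step is identification: one shows by a duality/integration-by-parts argument (Itô's formula applied to $\langle p(t),Y(t)\rangle$ for $Y$ solving the linearized forward equation, exactly the pairing used in the proof of Theorem~\ref{thm_1}) that the mild solution $(p,q)$ just constructed produces the same $p$ that Theorem~\ref{thm_1} obtained abstractly; hence the Hamiltonian inequality~\eqref{eq.Hamiltoniana} transfers verbatim, and uniqueness of the mild solution gives the final assertion.

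The main obstacle is the regularity bootstrap in the third step, i.e.\ proving that the $V'$-valued solution actually lives in $H'$ with the sharp blow-up rate as $t\to T$. This is delicate because the natural energy estimate only lives in the Gelfand triple built on $V'$, and improving it to $H'$ requires carefully tracking how the dual smoothing $S(t)^*$ (quantified only through Hypothesis~\ref{hp_V2} and the dual of \eqref{ipercontratt}) interacts with the stochastic integral term $\int_t^T S(s-t)^*q(s)\,dW(s)$ — one needs a maximal-inequality/factorization argument in the weighted space, and the polynomial growth of $D_xF$ means the estimate is genuinely nonlinear in $X$, which is what dictates the restriction $r'<2$. A secondary technical point is checking that $D_xF(X,u)^*$ is well-defined and bounded as an operator on $V'$ (not merely on $E'$), which again uses that multiplication by the bounded function $f'(X(\cdot),u)$ preserves $V$ — a property that in concrete cases (Section~4) must be verified for the specific choice of $V$.
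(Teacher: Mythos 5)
Your overall architecture (well-posedness of the adjoint BSPDE plus identification with the duality-defined $p$ of Theorem \ref{thm_1}) matches the paper's, but the core technical route you propose --- a fixed point in $V'$ followed by a regularity bootstrap to $H'$ --- has two genuine gaps.

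First, the fixed-point argument in $L^2_{\mathcal{F}}([0,T],\lambda;V')\times L^2_{\mathcal{F}}(\Omega\times[0,T];\mathcal{L}_2(K,V'))$ requires $D_xF(X(t),u(t))^*$ to act boundedly on $V'$, i.e.\ that multiplication by $f'(X(t,\cdot),u(t))$ preserves $V$. You flag this as a ``secondary technical point,'' but it is an obstruction: $f'(X(t,\cdot),u(t))$ is only a continuous function (with a random sup bound), and multiplication by a merely continuous function does not preserve a fractional Sobolev space such as the $V=W^{s,2}$ of Section 4. No such hypothesis appears in the paper, and the theorem is supposed to hold without it. The paper avoids the issue entirely: it replaces $F$ by its Yosida approximations $F_\alpha$ (Lipschitz on $H$, so that $T_\alpha(t)^*$ is bounded on $H'$), regularizes the data via $\zeta^\varepsilon=S(\varepsilon)'\zeta$ and $f^\varepsilon=S(\varepsilon)'f$, and solves the approximate BSPDE in $H'$ by the known Hilbert-space result of Hu--Peng; the space $V'$ enters only as the state space of $q$.

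Second, the bootstrap from $V'$ to $H'$ cannot be carried out as described: re-estimating the mild formula in $\abs{\cdot}_{H'}$ would require the stochastic convolution $\int_t^TS(s-t)^{\sim}q(s)\,dW(s)$ to be $H'$-valued with weight $(T-t)^\lambda$, i.e.\ a \emph{quantified} smoothing $S(t)':V'\to H'$. Hypothesis \ref{hp_V2} only gives $S(t)(E)\subset V$ with a bound in $\mathcal{L}(V)$ and no rate as $t\to 0$, and indeed the theorem does not claim $q$ is better than $\mathcal{L}_2(K,V')$-valued. The paper obtains the weighted $H'$-estimate on $p$ not by bootstrap but directly by duality: it tests the approximate solutions against the forward equation of Proposition \ref{prop.forward.duale} and takes the supremum over $\gamma$ in the unit ball of $L^r_{\mathcal{F}}(\Omega;L^2([0,T],-\lambda;H))$, which yields the $L^{r'}_{\mathcal{F}}(\Omega;L^2([0,T],\lambda;H'))$ bound uniformly in $(\varepsilon,\alpha)$, and then passes to a weak limit. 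Your own identification of the bootstrap as ``the main obstacle'' is accurate; the duality argument is precisely the missing idea that circumvents it, and it is also what makes the exponent $r'$ appear (as the conjugate of the $r$ in the forward estimate), rather than any H\"older loss from the polynomial growth of $D_xF$.
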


\begin{remark}{\em
It is worth noting that Hypothesis \ref{hp_V2} is used only in the proof of uniqueness of the Backward
SPDE. For the existence part it is sufficient that the semigroup preserves $V$, i.e. Hypothesis \ref{hp_V1}.
}
\end{remark}

\section{Examples}
The aim of this section is to present some concrete examples of stochastic control problems
 for SPDEs which can be treated using our results and where the general assumptions
 stated above can be effectively cheked.
 We begin by giving some general sufficient conditions for the space-time continuity of the stochastic convolution, that we have assumed in Hypothesis \ref{hp_eq}-2. Then we focus on the case in which the sectorial operator $A$ is a realization of the Laplace operator with Dirichlet
 boundary conditions:  we give examples of covariance operators $B$ for which
 Hypothesis \ref{hp_eq}-2 can be readily verified and
 we show how  the abstract Hilbert space $V$ can be chosen.
 Last, we briefly discuss the form of the cost functionals in which we are more interested in.

\subsection{A general condition for the space-time continuity of $W_A(t)$}
In the literature, most of the sufficient condition for space-time continuity of the stochastic convolution can only be applied to one-dimensional domains $\mathcal{O} \subset \R$ and to few simple domains in higher dimensions. Here we  prove some statements concerning
more general domains in $\R^n$, $n>1$, with boundary regular enough, mainly
relying on the results from \cite{grieser2002uniform}.
Although several arguments are usual, we could not find any reference
that includes the results we are going to prove.

The first step in this direction is the study of the Ornstein-Uhlenbeck process associated with the stochastic equation
\begin{equation}
dz_t  = Az_t dt + BdW_t.
\end{equation}
The existence of mild solution $W_A(t)$ in $H$ to the above equation is easily obtained by imposing the following trace condition, for any $t \geq 0$,
\begin{equation}
\int_0^t \norm{S(s)B}_{L_2(H)} ds = \int_0^t \opn{Tr}\left[ S(s)BB^*S(s)^* \right] ds < \infty.
\end{equation}
Under this assumption, $W_A(t)$ is a mean-square continuous Gaussian process with values in H and it is given by
\begin{equation}
W_A(t) = \int_0^t S(t-s)BdW(s).
\end{equation}
For our purposes, we need $W_A$ to be more regular and we make the following
\begin{hp}\label{hp_Stoc_conv}
\begin{itemize}
\item[(i)] For any $p\geq 2$, the semigroup $S(t)$ extends uniquely to a strongly continuous semigroup in $L^p(\mathcal{O})$.
\item[(ii)] For all $\varepsilon \in (0,1)$ and $p \geq 2$, there exist $r\geq 2$ and $K$ such that the following condition holds
\[ \abs{S(t)}_{W^{\varepsilon,p}(\mathcal{O})} \leq Kt^{-\frac{\varepsilon}{r}}\abs{x}_{L^p(\mathcal{O})}, \qquad \forall \, x \in L^{p}(\mathcal{O}), \]
where with $W^{\varepsilon,p}$ we denoted the fractional Sobolev space.
\item[(iii)] There exists an orthonormal basis $e_k$ of $H$ which diagonalize both $A$ and $B$. That is, there exist two sequences of real positive numbers $\mu_k$ and real $b_k$ such that $\mu_k \nearrow +\infty$ as $k \rightarrow +\infty$ and
\[ Ae_k = -\mu_k e_k, \quad Be_k = b_k e_k, \qquad k \in \mathbb{N}; \]
\item[(iv)] Each $e_k(\cdot)$ is bounded. Let $M >0$ and $c_k$ be an increasing sequence, then it holds that
\[ \abs{e_k(\xi)} \leq Mc_k, \qquad c_k \in \mathbb{N}, \xi \in \mathcal{O}; \]
\item[(v)] 	There exists $\alpha \in (0,\frac{1}{2})$ such that
\[ \sum_{k=1}^{+\infty} b_k^2\mu_k^{2\alpha-1}c_k^2 < \infty\]
\end{itemize}
\end{hp}
Now we are in position to state the following
\begin{thm}\label{t.convolution}
Assume Hypotheses (i)-(v) hold. Then the stochastic convolution $W_A: [0,+\infty) \times \mathcal{O} \rightarrow \R$ is continuous, $\mP$-a.s. Moreover, if $p \geq 2$ we have
\[ \E\sup_{t \in [0,T]}\abs{W_A(t)}^p_E < \infty \]
\end{thm}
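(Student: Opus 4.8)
The plan is to represent the stochastic convolution as a series, apply the factorization method to each mode, and then use a Sobolev embedding together with hypotheses (iii)--(v) to bound the relevant norms. First I would write $W_A(t) = \sum_k b_k \, e_k \, \beta_k^A(t)$, where $\beta_k^A(t) = \int_0^t e^{-\mu_k(t-s)}\,dW_k(s)$ is a one-dimensional Ornstein--Uhlenbeck process and $(W_k)$ are the independent scalar Brownian motions associated with the basis $(e_k)$ that diagonalizes both $A$ and $B$. Because $A$ acts diagonally, $S(t)e_k = e^{-\mu_k t}e_k$, so this series is the natural candidate; convergence in the appropriate space has to be established as part of the proof. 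The idea is to estimate, for a suitable fractional Sobolev norm, the quantity $\E\,\|W_A(t)\|^p$ uniformly in $t$, and then upgrade to continuity in $t$ and in $\xi$ simultaneously by choosing the Sobolev exponent $\varepsilon p > d$ so that $W^{\varepsilon,p}(\mathcal O)\hookrightarrow C(\bar{\mathcal O})$.

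The key technical device is the factorization formula: for $\gamma \in (0,1/2)$ one writes
\begin{equation*}
W_A(t) = \frac{\sin(\pi\gamma)}{\pi}\int_0^t (t-\sigma)^{\gamma-1} S(t-\sigma)\, Y(\sigma)\, d\sigma,
\qquad
Y(\sigma) = \int_0^\sigma (\sigma-s)^{-\gamma} S(\sigma-s)B\, dW(s).
\end{equation*}
By Young's inequality applied to the deterministic convolution, and using hypothesis (ii) to control $\|S(t-\sigma)\|_{\call(L^p,W^{\varepsilon,p})}\le K(t-\sigma)^{-\varepsilon/r}$, the $W^{\varepsilon,p}$-norm of $W_A(t)$ is dominated by a constant times $\left(\int_0^T \sigma^{(\gamma-1)q'}\sigma^{-(\varepsilon/r)q'}d\sigma\right)^{1/q'}\!\cdot\!\|Y\|_{L^p(0,T;L^p(\mathcal O))}$ provided $\gamma$ is chosen with $\gamma > \varepsilon/r$ and the time integral converges, which forces the standard constraint $\varepsilon/r + 1/p < \gamma < 1/2$ (feasible by taking $p$ large and $\varepsilon$ small, since hypothesis (ii) lets $r\ge 2$ be as needed). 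It then remains to bound $\E\,\|Y(\sigma)\|_{L^p(\mathcal O)}^p$ uniformly in $\sigma$. Here I would use the diagonal structure: $Y(\sigma)(\xi) = \sum_k b_k e_k(\xi)\int_0^\sigma (\sigma-s)^{-\gamma}e^{-\mu_k(\sigma-s)}dW_k(s)$, so by Gaussianity and hypothesis (iv), $\E|Y(\sigma)(\xi)|^p \le C_p\left(\sum_k b_k^2 M^2 c_k^2 \int_0^\sigma \tau^{-2\gamma}e^{-2\mu_k\tau}d\tau\right)^{p/2}$; evaluating the inner integral gives $\int_0^\infty \tau^{-2\gamma}e^{-2\mu_k\tau}d\tau = \Gamma(1-2\gamma)(2\mu_k)^{2\gamma-1}$, so the sum is comparable to $\sum_k b_k^2 c_k^2 \mu_k^{2\gamma-1}$, which is finite by hypothesis (v) as soon as $\gamma \le \alpha$ (again compatible with the constraints above by shrinking $\varepsilon$). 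Integrating over $\xi\in\mathcal O$ (a bounded set) yields the uniform bound $\sup_\sigma \E\,\|Y(\sigma)\|_{L^p(\mathcal O)}^p < \infty$, hence $\sup_t \E\,\|W_A(t)\|_{W^{\varepsilon,p}}^p < \infty$, and the Sobolev embedding gives $\sup_t\E\,|W_A(t)|_E^p<\infty$.

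Finally, to pass from this pointwise-in-$t$ moment bound to almost-sure joint continuity in $(t,\xi)$ and to the claimed $\E\sup_{t\in[0,T]}|W_A(t)|_E^p<\infty$, I would note that the factorization representation already exhibits $W_A$ as a deterministic $(t$-dependent$)$ convolution of the fixed $L^p(0,T;W^{\varepsilon,p})$-valued path $Y$; the map $Y\mapsto W_A$ is bounded from $L^p(0,T;L^p(\mathcal O))$ into $C([0,T];W^{\varepsilon,p}(\mathcal O))$ for the chosen exponents (this is the usual Da Prato--Kwapie\'n--Zabczyk argument, using $(\gamma-1-\varepsilon/r)q' > -1$), so $W_A \in C([0,T];W^{\varepsilon,p}) \hookrightarrow C([0,T];E)$ almost surely, and taking expectations of the supremum through the bounded linear map gives the moment estimate. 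I expect the main obstacle to be bookkeeping the admissible range of the four parameters $(p,\varepsilon,r,\gamma)$ so that all three constraints --- $\varepsilon p > d$ for the Sobolev embedding, $\varepsilon/r + 1/p < \gamma < 1/2$ for the factorization, and $\gamma \le \alpha$ for convergence of the series in (v) --- hold simultaneously; this is where hypothesis (ii) (allowing $r$ large) and hypothesis (v) (with $\alpha$ possibly small) interact delicately, and one must check there is no empty intersection.
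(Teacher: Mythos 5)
Your proposal follows essentially the same route as the paper: the factorization formula for $W_A$, the diagonal expansion of $Y(\sigma,\xi)$ over the common eigenbasis, the Gaussian variance bound via hypotheses (iv)--(v), and the passage to joint continuity through the regularizing convolution. The only difference is that you unpack explicitly (Young's inequality, hypothesis (ii), the Sobolev embedding $W^{\varepsilon,p}\hookrightarrow C(\bar{\mathcal O})$, and the parameter bookkeeping) what the paper delegates to its cited analytic lemma (Lemma \ref{l.analytic}); this is a correct and in fact slightly more self-contained version of the same argument.
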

Before proving the theorem we recall a useful analytic lemma (cfr. \cite{da2004kolmogorov}, page 23).
\begin{lemma}\label{l.analytic}
Assume Hypotheses (i)-(ii) hold. Let $T >0$, $p \geq 2$ and $f \in L^p([0,T]\times \mathcal{O})$. If we set
\[ F(t) = \int_0^t S(t-\sigma)(t-\sigma)^{\alpha-1}f(\sigma)d\sigma, \quad t \in [0,T]; \]
then $F \in C([0,T]\times \mathcal{O})$ and there exists a constant $C_{T,p}$ such that the following estimate hold
\[ \sup_{t,\xi}\abs{F(t,\xi)}^p \leq C_{T,p}\abs{f}^p_{L^p([0,T]\times \mathcal{O})}. \]
\end{lemma}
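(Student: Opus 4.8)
The plan is to exploit the smoothing bound in Hypothesis (ii) to push $F(t)$ into a fractional Sobolev space embedding into continuous functions, and then to control the time singularity by H\"older's inequality. First I would fix $\varepsilon\in(0,1)$ large enough that the Sobolev embedding $W^{\varepsilon,p}(\mathcal{O})\hookrightarrow C(\bar{\mathcal{O}})$ holds, which needs $\varepsilon p>d$, and simultaneously small enough that $\alpha>\frac1p+\frac{\varepsilon}{r}$, where $r\ge2$ is the exponent furnished by Hypothesis (ii) for this pair $(\varepsilon,p)$. Such an $\varepsilon$ exists once $p$ is large enough relative to $d$ and $1/\alpha$ (since $r\ge 2$, it suffices that $d/p<2(\alpha-1/p)$), which is the only regime we shall use. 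Throughout, $p'=p/(p-1)$ denotes the conjugate exponent.

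For the quantitative bound I would estimate, using Hypothesis (ii),
\[
\abs{F(t)}_{W^{\varepsilon,p}(\mathcal{O})}
\le \int_0^t \norm{S(t-\sigma)}_{\call(L^p,W^{\varepsilon,p})}(t-\sigma)^{\alpha-1}\abs{f(\sigma)}_{L^p(\mathcal{O})}\,d\sigma
\le K\int_0^t (t-\sigma)^{\alpha-1-\varepsilon/r}\abs{f(\sigma)}_{L^p(\mathcal{O})}\,d\sigma,
\]
and then apply H\"older's inequality in $\sigma$ with exponents $p'$ and $p$. The condition $\alpha>\frac1p+\frac{\varepsilon}{r}$ is precisely what makes the kernel $(t-\sigma)^{(\alpha-1-\varepsilon/r)p'}$ integrable on $[0,t]$, uniformly for $t\in[0,T]$, so that
\[
\sup_{t\in[0,T]}\abs{F(t)}_{W^{\varepsilon,p}(\mathcal{O})}^p\le C_{T,p}\int_0^T\abs{f(\sigma)}_{L^p(\mathcal{O})}^p\,d\sigma
= C_{T,p}\abs{f}_{L^p([0,T]\times\mathcal{O})}^p.
\]
Composing with the norm of the embedding $W^{\varepsilon,p}(\mathcal{O})\hookrightarrow C(\bar{\mathcal{O}})$ and taking the supremum over $\xi$ then gives the asserted estimate on $\sup_{t,\xi}\abs{F(t,\xi)}^p$.

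It remains to prove joint continuity, which I would obtain by a density argument. The map $f\mapsto F$ is linear and, by the estimate just proved, bounded from $L^p([0,T]\times\mathcal{O})$ into the Banach space of bounded functions on $[0,T]\times\bar{\mathcal{O}}$ equipped with the supremum norm. The subspace $C([0,T]\times\bar{\mathcal{O}})$ is closed there, hence the set of $f$ for which $F$ is continuous is closed in $L^p$; since it contains the dense class of continuous $f$, for which joint continuity of $F$ is classical (via the change of variable $\tau=t-\sigma$, the strong continuity of $S$ in $L^p(\mathcal{O})$ from Hypothesis (i), and dominated convergence), it must be all of $L^p$. Alternatively, the uniform bound on $\abs{F(t)}_{W^{\varepsilon,p}}$ makes the family $\{F(t)\}_{t}$ equi-H\"older of exponent $\varepsilon-d/p$ in $\xi$, so that continuity of $t\mapsto F(t)$ in the uniform norm upgrades to genuine joint continuity on the compact set $[0,T]\times\bar{\mathcal{O}}$.

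The main obstacle is the simultaneous choice of $\varepsilon$: the Sobolev embedding forces $\varepsilon$ large, while integrability of the time singularity forces $\varepsilon$ small, and reconciling the two is what pins down the admissible range of $\alpha$ and $p$. Once the parameters are fixed, the estimate reduces to a routine application of H\"older's inequality, and the continuity is a soft closedness-and-density argument rather than a further computation.
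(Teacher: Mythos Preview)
The paper does not actually prove this lemma: it is stated with the attribution ``cfr.\ \cite{da2004kolmogorov}, page 23'' and used as a black box in the proof of Theorem \ref{t.convolution}. Your argument is precisely the standard proof one finds in that reference (Da Prato, \emph{Kolmogorov Equations for Stochastic PDEs}): smooth into $W^{\varepsilon,p}(\mathcal{O})$ via Hypothesis (ii), apply the Sobolev embedding $W^{\varepsilon,p}\hookrightarrow C(\bar{\mathcal{O}})$, and control the time singularity by H\"older. The density-and-closedness argument for joint continuity is also the usual one.

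One point worth making explicit: the lemma as stated in the paper says ``Let $p\ge 2$'', but your proof (correctly) only works for $p$ large enough that an $\varepsilon\in(d/p,\,r(\alpha-1/p))$ exists. This is not a gap in your argument but a looseness in the paper's statement; in the application (Theorem \ref{t.convolution}) the exponent $p$ is free and is indeed taken large, so the restriction is harmless there. Since the paper is simply quoting the result, there is no meaningful divergence of approach to discuss.
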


\begin{proof}[Proof of Theorem \ref{t.convolution}]
 Using the factorization method (see e.g. \cite{da1992stochastic} for a detailed exposition), we write $W_A(t)$ in the following form
\begin{equation}
W_A(t) = \dfrac{\sin(\pi\alpha)}{\pi} \int_0^t S(t-\sigma)(t-\sigma)^{\alpha -1}Y(\sigma)d\sigma,
\end{equation}
where
\begin{equation}
Y(\sigma) = \int_0^{\sigma}S(\sigma-s)(\sigma - s)^{-\alpha}BdW(s), \quad \sigma \geq 0.
\end{equation}
If $\xi \in \mathcal{O}$, setting $Y(\sigma)(\xi) =: Y(\sigma,\xi)$ we have
\[ Y(\sigma,\xi) = \sum_{k=1}^{\infty} b_k\int_0^{\sigma} e^{-\mu_k(\sigma - s)}(\sigma - s)^{-\alpha}e_k(\xi)dW_k(s) \]
which is a gaussian random variable with zero mean and covariance given by
\begin{equation}
v_{\alpha}(\sigma,\xi) = \sum_{k=1}^{\infty} b_k^2 \int_0^{\sigma} e^{-2\mu_ks}s^{-2\alpha}\abs{e_k(\xi)}^2ds.
\end{equation}
Thanks to Hypotheses (iv)-(v) we see that
\begin{equation}
v_{\alpha}(\sigma,\xi) \leq M \sum_{k=1}^{\infty} b_k^2 \int_0^{\infty} e^{-z}\dfrac{z^{-2\alpha}}{2\mu_k^{1-2\alpha}} c_k^2 dz \leq M \sum_{k=1}^{\infty} b_k^2 \mu_k^{2\alpha -1}c_k^2 < \infty.
\end{equation}
Therefore, from the gaussianity of $Y(\cdot,\cdot)$, there exists $C_p > 0$ such that
\[ \E\abs{Y(\sigma,\xi)}^p \leq c(\E\abs{Y(\sigma,\xi)}^2)^{p/2} < C_p \]
and we have
\begin{equation*}
\E\int_0^T \int_{\mathcal{O}}\abs{Y(\sigma,\xi)}^{p}m(d\xi)dt \leq TC_pm(\mathcal{O}).
\end{equation*}
The conclusion follows from Lemma \ref{l.analytic}.
\end{proof}

\subsection{The Laplace operator}
Let us choose $A$ to be the realization of the Laplace operator $\Delta_{\xi}$ with Dirichlet boundary conditions. If $A$ is a suitable uniformly elliptic second order differential operator of negative type the same argument holds true with some modification. The state equation  reads as
\begin{equation*}
\begin{sistema}
\dfrac{\partial X}{\partial t}(t,\xi) = \Delta_{\xi} X(t,\xi) + f(X(t,\xi),u(t)) + B \dfrac{\partial^2 w}{\partial \xi \partial t}(t,\xi)  \\
X(0,\xi) = x_0(\xi),\quad \;\xi \in \mathcal{O}, \\
X(t,\xi) = 0, \qquad \quad \,t>0,\; \xi \in \partial \mathcal{O}.
\end{sistema}
\end{equation*}
In this case, the regularizing effect of the heat semigroup can be expressed by
\begin{equation}\label{eq.sobolev}
\abs{S(t)x}_{s,2} \leq ce^{-\frac{s}{2}t}(t\wedge 1)^{-\frac{s}{2}}\abs{x}_H,
\end{equation}
where $\abs{\cdot}_{s,2}$ is the norm in the fractional Sobolev space $W^{s,2}(\mathcal{O},\R)$. The Sobolev embedding theorem assures that $W^{s,2}(\mathcal{O},\R) \hookrightarrow C(\bar{\mathcal{O}},\R)$ if $s>d/2$. Then we have that
\begin{equation}
\abs{S(t)x}_{\infty} \leq ce^{-\frac{d}{4}t}(t\wedge 1)^{-\frac{d}{4}}\abs{x}_H,
\end{equation}
In order to satisfy Hypothesis \ref{hp_eq} we have to choose $d\leq 3$, therefore in this framework we are able to study stochastic equation of reaction-diffusion type only on bounded domains of dimensions $1,2$ or $3$. Thanks to the estimate \eqref{eq.sobolev}, a good choice for the Hilbert space $V$ is the fractional Sobolev space $W^{s,2}(\mathcal{O},\R)$, for which Hypotheses \ref{hp_V1} and \ref{hp_V2} are easily satisfied. Hence we end up with a triplet of the form
\[ W^{s,2}(\mathcal{O},\R) \subset C(\bar{\mathcal{O}},\R) \subset L^2(\mathcal{O},\R).\]	

Concerning the stochastic convolution $W_{\Delta}(t)$, we want to present here an explicit computation which finalizes the Hypotheses \ref{hp_Stoc_conv} presented above. Assumptions (i)-(ii) are easily verified with the choice $r =2$. The crucial hypothesis is the fourth one. Indeed the constant $M$ in (iv) can depends on the domain $\mathcal{O}$. According to a remark in \cite{grieser2002uniform}, the two basic domains one has to deal with are the square and the ball, which correspond
 to the best and worst case, respectively, as far as the growth of normalized
eigenfunctions is concerned.
Let us begin taking $\mathcal{O} = [0,\pi]^d$, the eigenfunctions of the Laplace operator are
\[e_k(\xi) = \left(\dfrac{2}{\pi}\right)^{\frac{d}{2}}\sin(k_1\xi)\cdots \sin(k_d\xi), \qquad \xi \in [0,\pi]^d, \abs{k}^2 = k_1^2 + \ldots + k_d^2. \]
hence they are uniformly bounded from above by a constant, i.e. $\abs{e_k(\xi)} \leq M$, for all $k \in \mathbb{N}$. On the other hand, for a general domain with smooth enough boundary, Grieser in \cite{grieser2002uniform} produce an estimate of form
\[ \sup_{x\in \mathcal{O}}\abs{e_k(x)} \leq M\mu_k^{\frac{d-1}{4}}. \]
It is worth noting that the bound given above is optimal in the case in which the domain is a ball, $\mathcal{O} = \lbrace x \in R^d: \abs{x}\le 1 \rbrace$. In the following we will refer to it as to the ``worst'' case.

In many applications the diffusion operator is written as power of $A$, more precisely $B = (-A)^{-\gamma}$, for some $\gamma \geq 0$. We are looking for some condition on $\gamma$ implying the continuity in space and time of the stochastic convolution. The general idea is the following: in order to have the space-time continuity it is sufficient to colour the noise a little bit more, in other words it is sufficient to increase enough the exponent $\gamma$. To do that, we recall a result proved by Agmon, see \cite{agmon1965kernels}, and we follow \cite{bonaccorsi2004integration} for the verification of the trace condition. Let $N(\mu)$ be the number of eigenvalues of $-A$ not exceeding $\mu$, then we have an asymptotic estimate of the following type
\begin{equation}\label{relation_eigenvalue}
N(\mu) = c\mu^{d/2} + o(\mu^{d/2}), \quad \text{for some } c>0.
\end{equation}
The relation above is useful in defining the measure $\nu$ on the real interval $[\mu_1,\infty)$. Here $\mu_1$ is the first (positive) eigenvalue such that
\[ \nu(D) = \sharp \lbrace k: \mu_k \in D \rbrace, \]
for every Borel subset D of $[\mu_1, \infty)$. It is obvious that $\nu([\mu_1,\mu]) = N(\mu)$. If we rewrite Hypothesis (v) in this setting we get, on one hand,
\begin{equation}
\sum_{k=1}^{\infty}\mu_k^{2\alpha -1 - 2\gamma + (d-1)/2} < \infty,
\end{equation}
for the ``worst'' case in which the domain $\mathcal{O}$ is a ball. On the other hand, If $\mathcal{O}$ is a hypercube the condition is the following
\begin{equation}
\sum_{k=1}^{\infty}\mu_k^{2\alpha -1 - 2\gamma} < \infty,
\end{equation}
Let us consider first the worst case, we get
\begin{equation}
\begin{split}
&\sum_{k=1}^{\infty}\mu_k^{2\alpha -1 - 2\gamma + (d-1)/2}\\
&= \int_{\mu_1}^{\infty}\mu^{2\alpha -1 - 2\gamma + (d-1)/2}\nu(d\mu)\\
&= - \int_{\mu_1}^{\infty}\int_{\mu}^{\infty}\dfrac{d}{ds}s^{2\alpha -1 - 2\gamma + (d-1)/2}ds\nu(d\mu)\\
&= \int_{\mu_1}^{\infty}\int_{\mu_1}^{\infty} C s^{2\alpha -2 - 2\gamma + (d-1)/2}1_{s\geq \mu}ds\nu(d\mu)\\
&= C \int_{\mu_1}^{\infty}s^{2\alpha -2 - 2\gamma + (d-1)/2} N(s)ds\\
&\leq C \int_{\mu_1}^{\infty}s^{2\alpha -2 - 2\gamma + (d-1)/2 + d/2} ds,
\end{split}
\end{equation}
Where we used  \eqref{relation_eigenvalue} in the last inequality. Obviously, Hypothesis (v) is satisfied if and only if the last integral is finite. That is, in terms of $\gamma$, if and only if
\begin{equation}
\gamma > \dfrac{2d-3}{4} + \alpha.
\end{equation}
Proceeding in the same way for $\mathcal{O} = [0,\pi]^d$ we obtain the following condition
\begin{equation}
\gamma > \dfrac{d-2}{4} + \alpha.
\end{equation}
As an example, let us consider for simplicity the two dimensional case. Here we have
\begin{itemize}
\item $\gamma > \alpha \qquad \qquad \quad$ if $\mathcal{O} = [0,\pi] \times [0,\pi]$,
\item $\gamma > 1/4 + \alpha \qquad \;$ if $\mathcal{O} = \lbrace x \in \R^2: \abs{x}\leq 1 \rbrace$,
\end{itemize}
with $\alpha \in (0,1/2)$.
It is interesting to notice that in \cite{bonaccorsi2004integration} the authors obtained the condition $\gamma > d/4 - 1/2$ (if $d =2$, this imply $\gamma >0$) for the verification of the trace condition. To obtain the space-time continuity the condition become more restricitve. In the two dimensional case, one has to choose an exponent $\gamma$ for the diffusion term that is strictly greater than $1/4$.

\subsection{Cost functional}
One of the motivations for the study of optimal control problems in the space of continuous functions is the possibility to consider a very large class of cost functionals. Here we are particularly interested in costs of Nemytskii type of the following form
\begin{equation}\label{costo.integrato}
J(u) = \E \int_0^T \int_{\mathcal{O}}l(t,X(t,\xi),u(t))\,\mu(d\xi)\,dt + \E\int_{\mathcal{O}} g(X(T,\xi)))\,\mu(d\xi),
\end{equation}
where $\mu$ is a given measure on $\mathcal{O} \subset \R^d$ and $l$, $h$ satisfy the following conditions
\begin{enumerate}
\item The functions
\[l(\omega,t,\sigma,u): \Omega \times [0,T] \times \R \times U \rightarrow \R\; \text{ and } \; g(\omega, \sigma): \Omega \times \R \rightarrow \R\]
are assumed to be measurable with respect to $\mathcal{P} \otimes \mathcal{B}(\R)\otimes \mathcal{B}(U)$ and $\mathcal{B}(\R)$ (respectively $\mathcal{F}_T \otimes \mathcal{B}(\R)$ and $\mathcal{B}(\R)$).
\item The maps $\sigma \mapsto l(\omega,t,\sigma,u)$ and $r \mapsto g(\omega, \sigma)$ are $C^1(\R)$ and there exists $K >0$, $k \geq 0$ such that, $\mP$-a.s.
\[\sup_{u \in U}\sup_{t \in [0,T]} \abs{D_\sigma l(t,\sigma,u)} \leq K(1 + \abs{\sigma}^{k}),\]
\[\abs{D_\sigma g(\sigma)} \leq K(1 + \abs{\sigma}^{k}).\]
\end{enumerate}
The two important cases we can deal with are
\begin{itemize}
\item $\mu_1 = m$ is the Lebesgue measure;
\item $\mu_2 = \sum_{i=1}^n a_i\delta_{\xi_i}$ is a linear  combination of Dirac measures
at points   $\xi_i \in \mathcal{O}$.
\end{itemize}
Observe that a cost of Nemytskii type with $\mu = \mu_2$ reduces to a sum of pointwise evaluations
 which is well defined in the Banach space $E$ of continuous functions but not
 in the space $H$. In our case, the development of a theory of stochastic optimal control problems in a Banach framework allows to control the evolution of a stochastic reaction-diffusion equation only in a finite number of points.

\section{Spike variation method}

In this section we state preliminary results needed for the proof of
Theorems \ref{thm_1} and \ref{thm_2}. We use the classical approach
based on spike variations (in time).
Throughout this section we suppose that Hypotheses \ref{hp_eq}, \ref{hp_cost}, \ref{hp_f} hold true.

Let us consider an optimal control $u(t)$ and the corresponding optimal trajectory $X(t)$.
To shorten somehow the notation, in this section we write
$u_t, X_t$ instead of  $u(t),X(t)$ and we use similar notation for other processes.
Let $E_{\varepsilon} \subset [0,T]$ be a set of measure $\varepsilon$ of the form $[t_0,t_0+\varepsilon]$, for some $t_0 \in (0,T)$, then we can introduce the spike variation process

\begin{equation}
u^{\varepsilon}_t=
\begin{cases}
u_t , & \quad \text{if } t \in [0,T]\setminus E_{\varepsilon} \\
w_t, & \quad \text{if } t \in E_{\varepsilon},
\end{cases}
\end{equation}
for some admissible control process $w$. The perturbed trajectory is denoted by $X^{\varepsilon}$. We are going to construct a new process $Y^{\varepsilon}$ which is the mild solution to the following equation, known as first variation equation:

\begin{equation}\label{eq.first.variation}
\begin{sistema}	
\dfrac{d}{dt} Y^{\varepsilon}_t = AY^{\varepsilon}_t + D_xF(X_t,u_t)Y^{\varepsilon}_t + \delta^\varepsilon F_t \\
Y^{\varepsilon}_0=0,
\end{sistema}
\end{equation}
where we used the notation $\delta^{\varepsilon}F_t := F(X_t,u_t^{\varepsilon}) - F(X_t,u_t)$. More precisely we say that $Y^{\varepsilon}$ is a mild solution to the above equation if for all $t>0$, $\mP$-a.s. we have
\begin{equation*}
Y^{\varepsilon}_t  = \int_0^t S(t-s)\big( D_xF(X_s,u_s)Y^{\varepsilon}_s + \delta^\varepsilon F_s\big) ds.
\end{equation*}
Existence and uniqueness of a mild solution of equation \eqref{eq.first.variation} with values in $E$ is well known, see e.g. \cite{cerrai2001second} for a detailed exposition. Let us recall that,
according to Lemma \ref{l.nemytskii}, for any $y \in E$
\begin{equation}\label{dissip.DF}
\braket{DF(X_t,u_t)y,\delta_{y}}_{E} \leq c\abs{y}_E, \qquad \mP-\text{a.s.}
\end{equation}
where $\delta_{y}$ is an element of the subdifferential of the norm $\partial\abs{y}_E$ and $c \in \R$.
We will use the following notation throughout the paper:
\begin{equation}\label{crescita.DF}
\abs{DF(X_t,u_t)y}_{E} \leq C_F(\omega)\abs{y}_E, \qquad \mP-\text{a.s.}
\end{equation}
where $C_F(\omega)$ denotes a suitable random variable having finite moments of any order. \\
Now we want to write the difference of the cost functional $J(u^{\varepsilon}) - J(u)$ as a function of $Y^{\varepsilon}$, up to a negligible reminder. In order to do that we need some estimate of the difference $X - X^{\varepsilon} - Y^{\varepsilon}$. More precisely we state the following
\begin{lemma}\label{l.estimates.spike}
If we define $\xi_t^{\varepsilon} := X^{\varepsilon}_t - X_t$, and $\eta_t^{\varepsilon}:= \xi^{\varepsilon}_t - Y^{\varepsilon}_t$, then the following estimates hold, for $k \ge 1$,
\begin{itemize}
\item[(i)] $\E\sup_{t \in [0,T]}\abs{\xi_t^{\varepsilon}}^{2k}_E = O(\varepsilon^{2k})$,
\item[(ii)] $\E\sup_{t \in [0,T]}\abs{Y^{\varepsilon}_t}^{2k}_E = O(\varepsilon^{2k})$,
\item[(iii)] $\E\sup_{t \in [0,T]}\abs{\eta^{\varepsilon}_t}^{2k}_E = o(\varepsilon^{2k}).$
\end{itemize}
\end{lemma}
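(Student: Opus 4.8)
The plan is to prove the three estimates by a bootstrap argument, treating them in the order (i) $\to$ (ii) $\to$ (iii), since each estimate feeds into the next. The unifying technical tool will be the standard a priori bound for mild solutions of evolution equations with dissipative drift in $E$: if $Z_t = \int_0^t S(t-s)[D_xF(X_s,u_s)Z_s + g_s]\,ds$, then the dissipativity inequality \eqref{dissip.DF} combined with the contractivity of $S(t)$ gives, after differentiating $|Z_t|_E$ (or rather working with the Yosida approximants $F_\alpha$ from Lemma \ref{l.yosida} and passing to the limit, to make the subdifferential computation rigorous), a bound of the Gronwall type $\sup_{t}|Z_t|_E \le C\int_0^T |g_s|_E\,ds$ with $C$ depending only on $c,T$. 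The polynomial-growth bound Lemma \ref{l.nemytskii}(ii) together with the fact that $X \in L^p_{\mathcal F}(\Omega;C([0,T];E))$ for all $p$ (the well-posedness theorem) controls the random constant $C_F(\omega)$ in \eqref{crescita.DF} in every $L^p(\Omega)$.

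\emph{Step 1, estimate (ii).} First I would dispatch (ii): $Y^\varepsilon$ solves the linear equation \eqref{eq.first.variation} with inhomogeneity $\delta^\varepsilon F_t = F(X_t,u^\varepsilon_t)-F(X_t,u_t)$, which is supported on $E_\varepsilon$ and bounded pointwise in $\omega$ by $2\sup_{u}|F(X_t,u)|_E \le C(1+|X_t|^{k+1}_E)$ using \eqref{polygrowth}. Hence $\int_0^T|\delta^\varepsilon F_s|_E\,ds \le \varepsilon\, C(1+\sup_t|X_t|^{k+1}_E)$, and the dissipative Gronwall bound gives $\sup_t|Y^\varepsilon_t|_E \le C_F(\omega)\,\varepsilon$ with $C_F$ having all moments; raising to the power $2k$ and taking expectations yields (ii).

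\emph{Step 2, estimate (i).} For (i), $\xi^\varepsilon = X^\varepsilon - X$ satisfies the mild equation $\xi^\varepsilon_t = \int_0^t S(t-s)[F(X^\varepsilon_s,u^\varepsilon_s)-F(X_s,u_s)]\,ds$. I would split the integrand as $[F(X^\varepsilon_s,u^\varepsilon_s)-F(X_s,u^\varepsilon_s)] + \delta^\varepsilon F_s$; the first bracket is handled by dissipativity of $F(\cdot,u^\varepsilon_s)-cI$ (Lemma \ref{l.nemytskii}(i)), which after the usual subdifferential estimate absorbs it into a Gronwall term $c\int_0^t|\xi^\varepsilon_s|_E\,ds$, while the second bracket is again $O(\varepsilon)$ in $L^p(\Omega)$ as in Step 1. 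This gives $\sup_t|\xi^\varepsilon_t|_E \le C_F(\omega)\,\varepsilon$ and hence (i).

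\emph{Step 3, estimate (iii), the main obstacle.} The delicate point is (iii): $\eta^\varepsilon = \xi^\varepsilon - Y^\varepsilon$ satisfies, by subtracting the two mild equations,
\[
\eta^\varepsilon_t = \int_0^t S(t-s)\Big[ F(X^\varepsilon_s,u^\varepsilon_s) - F(X_s,u^\varepsilon_s) - D_xF(X_s,u_s)Y^\varepsilon_s \Big]\,ds,
\]
and one wants the bracket to be $o(\varepsilon)$ in the right norm. I would Taylor-expand $F(X^\varepsilon_s,u^\varepsilon_s) - F(X_s,u^\varepsilon_s) = \int_0^1 D_xF(X_s+\theta\xi^\varepsilon_s,u^\varepsilon_s)\xi^\varepsilon_s\,d\theta$ and rewrite the bracket as
\[
D_xF(X_s,u^\varepsilon_s)\eta^\varepsilon_s \;+\; \big(D_xF(X_s,u^\varepsilon_s)-D_xF(X_s,u_s)\big)Y^\varepsilon_s \;+\; \int_0^1\!\big(D_xF(X_s+\theta\xi^\varepsilon_s,u^\varepsilon_s)-D_xF(X_s,u^\varepsilon_s)\big)\xi^\varepsilon_s\,d\theta.
\]
The first term gives the Gronwall loop via dissipativity (Lemma \ref{l.nemytskii}(iii)); the third term is bounded using uniform continuity of $f'(\cdot,u)$ on compacts (as in the proof of Lemma \ref{l.multiplication}) times $|\xi^\varepsilon_s|_E$, and since $|\xi^\varepsilon|_E = O(\varepsilon)$ in every $L^p$ while the continuity modulus $\to 0$, a dominated-convergence / Hölder argument makes it $o(\varepsilon)$ in $L^{2k}(\Omega)$. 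The genuinely subtle term is the second one, $(D_xF(X_s,u^\varepsilon_s)-D_xF(X_s,u_s))Y^\varepsilon_s$: here $u^\varepsilon_s$ differs from $u_s$ only on $E_\varepsilon$, so this term is supported on a set of measure $\varepsilon$, is bounded by $C_F(\omega)|Y^\varepsilon_s|_E$, and $|Y^\varepsilon_s|_E$ is itself $O(\varepsilon)$ by (ii); integrating over $[0,T]$ produces the combined gain $\varepsilon \cdot \varepsilon = \varepsilon^2$, i.e. $o(\varepsilon)$. Assembling these, the dissipative Gronwall lemma closes the argument and yields $\sup_t|\eta^\varepsilon_t|_E = o(\varepsilon)$ in $L^{2k}(\Omega)$, hence (iii). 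The main work is making the subdifferential manipulations in $E$ rigorous — I would systematically replace $F$ by its Yosida approximation $F_\alpha$, run all Gronwall estimates with the (genuinely differentiable, Lipschitz) $F_\alpha$, use \eqref{polygrowthdue} for $\alpha$-uniform bounds, and pass to the limit $\alpha\to 0$ using the last convergence statement in Lemma \ref{l.yosida}.
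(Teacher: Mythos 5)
Your proposal follows essentially the same route as the paper: all three estimates are derived from the dissipativity of $F$ via the subdifferential inequality and Gronwall's lemma, the inhomogeneity $\delta^{\varepsilon}F$ supported on $E_{\varepsilon}$ gives the $O(\varepsilon)$ rate in (i) and (ii), and your decomposition of the drift of $\eta^{\varepsilon}$ in (iii) --- a Gronwall term $D_xF(X_s,\cdot)\eta^{\varepsilon}_s$, a continuity-modulus term acting on $\xi^{\varepsilon}_s$, and a control-difference term supported on $E_{\varepsilon}$ that gains the extra factor of $\varepsilon$ --- is the same as the paper's (the paper applies $\delta^{\varepsilon}D_xF$ to $\xi^{\varepsilon}$ rather than to $Y^{\varepsilon}$, but both are $O(\varepsilon)$, so the bookkeeping is identical). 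The one point where your plan would not work as written is the regularization you invoke to justify the left-derivative computation: replacing $F$ by its Yosida approximation $F_{\alpha}$ makes the nonlinearity Lipschitz, but the equation is still only a mild equation with the unbounded operator $A$, so the solution is still not differentiable in $t$ and the inequality $\frac{d^-}{dt}\abs{Z_t}_E \leq \braket{AZ_t+\dots,\delta}_E$ remains formal. The obstruction is $A$, not $F$; the paper instead replaces $A$ by its Yosida approximations $A_n=nA(n-A)^{-1}$, for which the approximating equations are genuine ODEs in $E$, uses $\braket{A_nx,\delta_x}_E\leq 0$ (inherited from contractivity of $S$), and passes to the limit $n\to\infty$ only at the level of the integrated Gronwall estimate. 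With that substitution your argument closes; the paper's preliminary subtraction of the stochastic convolution $z_t$ is immaterial for these estimates, since the noise is additive and cancels in every difference you need to bound.
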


\begin{proof}
$(i)$. Let us denote for brevity $z_t := \int_0^t e^{(t-s)A}QdW(s)$ and consider $v_t := X_t - z_t$, $v^{\varepsilon}_t:= X^\varepsilon_t - z_t$. Then 
\begin{equation*}
v_t = \int_0^t S(t-s)F(v_s + z_s, u_s)ds, \qquad v^{\varepsilon}_t = \int_0^t S(t-s) F(v_s^{\varepsilon} + z_s, u^{\varepsilon}_s)ds.
\end{equation*}
Let $v^{n,\varepsilon}, v^{n}$ solutions to the corresponding equations with $S(\cdot)$ replaced by $S_n(\cdot)$. Therefore $v^{\varepsilon,n}$ and $v^{n}$ are solutions of the equations
\begin{equation}
\frac{d}{dt}v_t^n = A_n v_t^n + F(v^n_s + z_s, u_s), \qquad \frac{d}{dt}v_t^{n,\varepsilon} = A_n v_t^{n,\varepsilon} + F(v^{n,\varepsilon} + z_s, u^\varepsilon_s)
\end{equation}
where $A_n = nA(n-A)^{-1}$ are the Yosida approximations of the operator $A$.
Then, if  $\delta_t \in \partial\abs{v^{n,\varepsilon}_t - v_t^n}_E$, we have
\begin{equation*}
\begin{split}
\dfrac{d^-}{dt}\abs{v^{n,\varepsilon}_t - v_t^n}_E &\leq \braket{A_n\left( v^{n,\varepsilon}_t - v^{n}_t \right) + F(v^{n,\varepsilon}_t + z_t,u^{\varepsilon}_t) - F(v^n_t + z_t,u_t), \delta_t}_E \\
&\le \braket{F(v^{n,\varepsilon}_t + z_t,u^{\varepsilon}_t) - F(v^n_t + z_t,u^{\varepsilon}_t), \delta_t}_E + \braket{F(v^n_t + z_t,u^\varepsilon) - F(v^n_t + z_t,u_t), \delta_t}_E \\
&\leq c\abs{v^{n,\varepsilon}_t - v^n_t}_E + \abs{F(v^n_t + z_t,u^\varepsilon) - F(v^n_t + z_t,u_t)}_E, \\
\end{split}
\end{equation*}
from the contraction property of the semigroup and the dissipativity of the operator $F$. Thanks to the Gronwall lemma we obtain
\begin{equation*}
\abs{v^{n,\varepsilon}_t - v^n_t}_E \leq c\int_0^t \abs{F(v^n_t + z_t,u^\varepsilon) - F(v^n_t + z_t,u_t)}_E ds.
\end{equation*}
Letting $n\to \infty$
\begin{equation}\label{eq.xi}
\abs{v^{\varepsilon}_t - v_t}_E \leq c\int_0^t \abs{\delta^{\varepsilon}F_s}_E  ds.
\end{equation}
Now recall that $\xi_t^\varepsilon := X^\varepsilon_t - X_t = v^\varepsilon_t - v_t$, using the polynomial growth of $F$ we end up with 
\[ \abs{\xi_t^\varepsilon}_E \leq c \int_{E_{\varepsilon}} \abs{\delta^{\varepsilon}F_s}_E  ds \leq C_F(\omega)\varepsilon, \]
where $C_F(\omega)$ has finite moments of any order. Hence $\E\sup_{t}\abs{\xi^{\varepsilon}_t}^{2k}_E \leq C_1\varepsilon^{2k}$, $k \ge 1$. \\
$(ii)$.  Let $Y^{n,\varepsilon}$ be the solution to the following equation
\begin{equation}
\frac{d}{dt} Y_t^{n,\varepsilon} = A_nY^{n,\varepsilon}_t + D_xF(X_t,u_t)Y^{n,\varepsilon}_t + \delta^\varepsilon F_t. 
\end{equation}
Using the dissipativity assumptions we get
\begin{equation*}
\begin{split}
\dfrac{d^-}{dt}\abs{Y^{n,\varepsilon}_t}_E &\leq \braket{A_nY^{n,\varepsilon}_t + D_xF(X^{}_t,u_t)Y^{n,\varepsilon}_t + \delta^{\varepsilon}F_t, \delta_{Y^{n,\varepsilon}_t}}_E \\
&\leq c\abs{Y^{n,\varepsilon}_t}_E + \abs{\delta^{\varepsilon}F_t}_E, \\
\end{split}
\end{equation*}
then using the same strategy as before and passing to the limit with $n \rightarrow \infty$ we get the required result.\\
$(iii)$. Now let us define $\eta^{n,\varepsilon}_t := v^{n,\varepsilon}_t - v^n_t - Y^{n,\varepsilon}_t$. If $\delta_t \in \partial\abs{\eta^{n,\varepsilon}_t}$, we have
\begin{equation*}
\begin{split}
\dfrac{d^-}{dt}\abs{\eta^{n,\varepsilon}_t}_E &\leq \langle A_n\eta^{n,\varepsilon}_t + F(v^{n,\varepsilon}_t + z_t, u^{\varepsilon}_t) - F(v^n_t + z_t,u^{\varepsilon}_t) \\
&\quad - D_xF(v_t + z_t,u_t)(v^{n,\varepsilon}_t - v^n_t) + D_xF(v_t + z_t,u_t)\eta^{n,\varepsilon}_t, \delta_t\rangle_E \\
&\leq \langle \int_0^1 D_xF\left( v^n_t + z_t  + \theta \left(v^{n,\varepsilon}_t - v^n_t\right),u^{\varepsilon}_t\right)\left(v^{n,\varepsilon}_t - v^n_t\right) d\theta \\
&\quad -D_xF(v_t + z_t,u_t)(v^{n,\varepsilon}_t - v^n_t),\delta_t \rangle_E  + c\abs{\eta^{n,\varepsilon}_t}_E\\
&= \langle \int_0^1 \left[ D_xF\left( v^n_t + z_t  + \theta \left(v^{n,\varepsilon}_t - v^n_t\right),u^{\varepsilon}_t\right) - D_xF\left( v^n_t + z_t,u^{\varepsilon}_t\right)\right]\left(v^{n,\varepsilon}_t - v^n_t\right) d\theta,\delta_t \rangle_E \\
&\quad +\langle \left[ D_xF(v^n_t + z_t,u^\varepsilon_t) - D_xF(v_t + z_t,u^\varepsilon_t)\right](v^{n,\varepsilon}_t - v^n_t),\delta_t \rangle_E\\
&\quad + \delta^{\varepsilon}D_xF_t\cdot (v^{n,\varepsilon}_t - v^n_t),\delta_t \rangle_E  + c\abs{\eta^{n,\varepsilon}_t}_E.\\
\end{split}
\end{equation*}
Thanks to the Gronwall lemma we obtain
\begin{equation*}
\begin{split}	
\abs{\eta^{n,\varepsilon}_t}_E &\leq C\int_0^t \int_0^1 \Big| D_xF\left( v^n_s + z_s  + \theta \left(v^{n,\varepsilon}_t - v^n_t\right),u^{\varepsilon}_s\right) - D_xF\left( v^n_s + z_s,u^{\varepsilon}_t\right)\Big|_{\mathcal{L}(E)}\big|v^{n,\varepsilon}_s - v^n_s\big|_E d\theta ds\\
&+ C\int_0^t \Big| D_xF(v^n_s+ z_s,u^\varepsilon_s) - D_xF(v_s + z_s,u^\varepsilon_s) \Big|_{\mathcal{L}(E)}\big|v^{n,\varepsilon}_s - v^n_s\big|_E ds \\
&+ C\int_{E_{\varepsilon}}\Big|\delta^{\varepsilon}D_xF_s\Big|_{\mathcal{L}(E)}\big| v^{n,\varepsilon}_s - v^n_s \big|_E ds\\
\end{split}
\end{equation*}
Letting $n \to 0$
\begin{equation*}
\begin{split}	
\abs{\eta^{\varepsilon}_t}_E &\leq C\int_0^t \int_0^1 \Big| D_xF\left( v_s + z_s  + \theta \xi^\varepsilon_t,u^{\varepsilon}_s\right) - D_xF\left( v_s + z_s,u^{\varepsilon}_t\right)\Big|_{\mathcal{L}(E)}\abs{ \xi^\varepsilon_t}_E d\theta ds\\
&+ C\int_{E_{\varepsilon}}\Big|\delta^{\varepsilon}D_xF_s\Big|_{\mathcal{L}(E)}\abs{\xi^\varepsilon_t}_E ds,\\
\end{split}
\end{equation*}
thanks to the continuity of the map $DF(\cdot): E \rightarrow \mathcal{L}(E)$. Now according to the estimate obtained in $(i)$ and to the H\"older inequality we obtain
\begin{equation*}
\begin{split}	
\E \sup_{t \in [0,T]} \abs{\eta^{\varepsilon}_t}^{2k}_E &\leq C \varepsilon^{2k} \cdot \left( \E \int_0^T \int_0^1 \Big|D_xF(v_s + z_s + \theta\xi^{\varepsilon}_s,u^{\varepsilon}_s) - D_xF(v_s + z_s,u^{\varepsilon}_s)\Big|^{4k}_{\mathcal{L}(E)}d\theta ds \right)^{1/2} \\
&+ C\varepsilon^{2k} \left( \E \int_{E_{\varepsilon}}\Big|\delta^{\varepsilon}D_xF_s\Big|_{\mathcal{L}(E)}^{4k}ds \right)^{1/2} .
\end{split}
\end{equation*}
The continuity of the map $DF(\cdot): E \rightarrow \mathcal{L}(E)$ implies again that $\abs{D_xF(v_t + z_t + \theta\xi_t,u^{\varepsilon}_t) - D_xF(v_t + z_t,u^{\varepsilon}_t)}_{\mathcal{L}(E)}$ tends to zero if $\varepsilon \rightarrow 0$. Thanks to the polynomial growth of $D_xF$ we get the result.
\end{proof}

Now we can deal with the expansion of the cost.

\begin{prop}\label{prop.cost.expansion}
We have the following
\begin{equation*}
J(u^{\varepsilon}) - J(u) = \E\int_0^T \left[ \delta^{\varepsilon}L_t + D_xL(t,X_t,u_t)Y^{\varepsilon}_t\right]dt + \E \left( D_xG(X_T)Y^{\varepsilon}_T \right) + o(\varepsilon),
\end{equation*}
where $\delta^{\varepsilon}L_t = L(t,X_t,u^{\varepsilon}_t) - L(t,X_t,u_t).$
\end{prop}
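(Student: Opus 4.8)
The plan is to expand $J(u^\varepsilon)-J(u)$ directly from the definition of the cost functional and replace the increments of $L$ and $G$ by their first-order Taylor approximations along $X^\varepsilon$, then use Lemma~\ref{l.estimates.spike} to discard the remainder. Writing
\[
J(u^\varepsilon)-J(u)=\E\int_0^T\big[L(t,X^\varepsilon_t,u^\varepsilon_t)-L(t,X_t,u_t)\big]dt+\E\big[G(X^\varepsilon_T)-G(X_T)\big],
\]
I would split the integrand into two pieces: a "control" increment $L(t,X^\varepsilon_t,u^\varepsilon_t)-L(t,X_t,u^\varepsilon_t)$ and a "state" increment $L(t,X_t,u^\varepsilon_t)-L(t,X_t,u_t)=\delta^\varepsilon L_t$. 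Wait --- more precisely, I would write $L(t,X^\varepsilon_t,u^\varepsilon_t)-L(t,X_t,u_t)=\big[L(t,X^\varepsilon_t,u^\varepsilon_t)-L(t,X_t,u^\varepsilon_t)\big]+\delta^\varepsilon L_t$, and treat the first bracket by the mean value theorem in $x$, getting $\int_0^1 D_xL(t,X_t+\theta\xi^\varepsilon_t,u^\varepsilon_t)\xi^\varepsilon_t\,d\theta$.

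The next step is to show that in this expression $u^\varepsilon_t$ may be replaced by $u_t$ and $X_t+\theta\xi^\varepsilon_t$ by $X_t$, at the cost of an $o(\varepsilon)$ error, and that $\xi^\varepsilon_t$ may be replaced by $Y^\varepsilon_t$. For the last replacement I would use $\xi^\varepsilon_t=Y^\varepsilon_t+\eta^\varepsilon_t$ together with $\E\sup_t|\eta^\varepsilon_t|_E^{2}=o(\varepsilon^2)$ from Lemma~\ref{l.estimates.spike}(iii) and the polynomial growth of $D_xL$ from Hypothesis~\ref{hp_cost}, controlling $\E\int_0^T|D_xL(t,X_t,u^\varepsilon_t)|_E\,|\eta^\varepsilon_t|_E\,dt$ by Cauchy--Schwarz after absorbing the polynomial growth via the moment bounds $X\in L^p_{\mathcal F}(\Omega;C([0,T];E))$ for all $p$. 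For replacing $X_t+\theta\xi^\varepsilon_t$ by $X_t$ inside $D_xL$: since $D_xL(t,\cdot,u)$ need not be uniformly continuous, one uses instead continuity of $x\mapsto D_xL(t,x,u)$ pointwise, the bound $|\xi^\varepsilon_t|_E=O(\varepsilon)$ from (i), and dominated convergence after dividing by $\varepsilon$ --- i.e. one shows $\varepsilon^{-1}\E\int_0^T|D_xL(t,X_t+\theta\xi^\varepsilon_t,u^\varepsilon_t)-D_xL(t,X_t,u^\varepsilon_t)|_E|Y^\varepsilon_t|_E\,dt\to0$ using (ii) and uniform integrability. The replacement of $u^\varepsilon_t$ by $u_t$ in $D_xL(t,X_t,\cdot)$ is delicate only on $E_\varepsilon$, a set of measure $\varepsilon$; there the integrand is $O(1)$ in expectation (again by growth bounds and moments of $X$), so its contribution, after multiplication by $|Y^\varepsilon_t|_E=O(\varepsilon)$... actually by $|\xi^\varepsilon_t|_E$, is already $O(\varepsilon^2)=o(\varepsilon)$ when integrated over $E_\varepsilon$. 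The terminal term $G(X^\varepsilon_T)-G(X_T)$ is handled identically, with no control-increment piece, giving $\E\big[D_xG(X_T)Y^\varepsilon_T\big]+o(\varepsilon)$.

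The main obstacle I anticipate is the lack of Lipschitz (let alone uniform) continuity of $D_xL$ and $D_xG$: all replacements of nearby arguments must go through pointwise continuity plus a dominated-convergence argument rather than a crude Lipschitz estimate, and at each stage one must verify the dominating functions are integrable using precisely the polynomial-growth bounds of Hypothesis~\ref{hp_cost} combined with the arbitrary-order moment estimates on $\sup_t|X_t|_E$, $\sup_t|X^\varepsilon_t|_E$, $\sup_t|Y^\varepsilon_t|_E$, $\sup_t|\xi^\varepsilon_t|_E$. A secondary technical point is that the error terms come out naturally as $o(\varepsilon^2)$ or $O(\varepsilon^2)$ in some places and merely $o(\varepsilon)$ in others (the ones involving $\eta^\varepsilon$ and the pointwise-continuity corrections), so one must be careful that every discarded term is genuinely $o(\varepsilon)$ after all cancellations; collecting the surviving first-order terms then yields exactly the stated formula.
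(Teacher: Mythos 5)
Your proposal follows essentially the same route as the paper: decompose the increment into a state part and a control part, Taylor-expand $L$ and $G$ in $x$ along the segment from $X_t$ to $X^\varepsilon_t$, discard the difference of derivatives via continuity of $D_xL$, $D_xG$ plus the $O(\varepsilon^2)$ bound on $\E\sup_t|\xi^\varepsilon_t|_E^2$, and substitute $\xi^\varepsilon=Y^\varepsilon+\eta^\varepsilon$ using Lemma~\ref{l.estimates.spike}(iii) to absorb $\eta^\varepsilon$ into the remainder. Your explicit treatment of replacing $u^\varepsilon_t$ by $u_t$ in the $D_xL$ term (an $O(\varepsilon^2)$ correction supported on $E_\varepsilon$) is a detail the paper leaves implicit, but it does not change the argument.
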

\begin{proof}
The difference between the two cost functionals reads as
\begin{equation*}
J(u^{\varepsilon}) - J(u) = \E\int_0^T \left[L(t, X_t^{\varepsilon},u^{\varepsilon}_t) - L(t,X_t,u_t) \right] dt + \E\left[ G(X_T^{\varepsilon}) - G(X_T) \right]
\end{equation*}
Let us begin rewriting the running cost part
\begin{equation*}
\begin{split}
&\E\int_0^T \left[L(t,X_t^{\varepsilon},u^{\varepsilon}_t) - L(t,X_t,u^{\varepsilon}_t) \right]dt + \E\int_0^T\delta^{\varepsilon}L_t  dt\\
&= \E\int_0^T \int_0^1 D_xL(t,X_t + \theta\xi^{\varepsilon}_t,u_t^{\varepsilon})\xi^{\varepsilon}_t d\theta dt  + \E\int_0^T\delta^{\varepsilon}L_tdt\\
&= \E\int_0^T\int_0^1 \left[D_xL(t,X_t + \theta\xi^{\varepsilon}_t,u_t^{\varepsilon}) - D_xL(t,X_t,u_t^{\varepsilon})\right]d\theta\xi^{\varepsilon}_tdt \\
&+ \E\int_0^T D_xL(t,X_t,u^{\varepsilon}_t)\xi^{\varepsilon}_t dt
+ \E\int_0^T \delta^{\varepsilon}L_t  dt\\
\end{split}	
\end{equation*}
\begin{equation*}
\begin{split}
&\leq C \left(\E\sup_{t \in [0,T]}\abs{\xi_t^{\varepsilon}}^2_E\right)^{1/2}\left(\E\left( \int_0^T \Big| \int_0^1 \left[D_xL(t,X_t + \theta\xi^{\varepsilon}_t,u_t^{\varepsilon}) - D_xL(t,X_t,u_t^{\varepsilon})\right]d\theta \Big| dt \right)^2 \right)^{1/2}\\
&+ \E\int_0^T D_xL(t,X_t,u^{\varepsilon}_t)Y^{\varepsilon}_t dt + \E\int_0^T  D_xL(t,X_t,u^{\varepsilon}_t)\eta^{\varepsilon}_t dt + \E\int_0^T \delta^{\varepsilon}L_t  dt\\
&\leq C \varepsilon \cdot \left(\E\left( \int_0^T \Big| \int_0^1 \left[D_xL(t,X_t + \theta\xi^{\varepsilon}_t,u_t^{\varepsilon}) - D_xL(t,X_t,u_t^{\varepsilon})\right]d\theta \Big| dt \right)^2 \right)^{1/2}\\
&+ (\E(\sup_{t \in [0,T]}\abs{\eta_t}_E^{\varepsilon})^2)^{1/2} \left(\E \left(\int_0^T D_xL(t,X_t,u^{\varepsilon}_t)dt\right)^2\right)^{1/2}\\
&+\E\int_0^T D_xL(t,X_t,u^{\varepsilon}_t) Y^{\varepsilon}_t dt + \E\int_0^T \delta^{\varepsilon}L_t  dt \\
&=\E\int_0^T D_xL(t,X_t,u^{\varepsilon}_t) Y^{\varepsilon}_t dt + \E\int_0^T \delta^{\varepsilon}L_t  dt  + o(\varepsilon),  \\
\end{split}
\end{equation*}
where we used the estimates given in Lemma \ref{l.estimates.spike}, the continuity of the map $DL(\cdot): E \to \mathcal{L}(E)$ and the polynomila growth. Regarding the second term we have
\begin{equation*}
\begin{split}
&\E\left[ G(X_T^{\varepsilon}) - G(X_T) \right] = \E\int_0^1 D_xG(X_T + \theta\xi^{\varepsilon}_T)d\theta\cdot\xi^{\varepsilon}_T \\
&= \E\int_0^1 \left[  D_xG(X_T + \theta\xi^{\varepsilon}_T) - D_xG(X_t)\right]d\theta\cdot\xi^{\varepsilon}_T + \E \left( D_xG(X_T)\xi^{\varepsilon}_T\right)\\
&\leq C\varepsilon \cdot \left( \E \left( \int_0^1 \left[  D_xG(X_T + \theta\xi^{\varepsilon}_T) - D_xG(X_t)\right]d\theta \right)^2 \right)^{1/2}\\
&+ \E \left( D_xG(X_T)Y^{\varepsilon}_T \right)+ \E \sup_{t \in [0,T]}\abs{\eta^{\varepsilon}_T}_E D_xG(X_t)\\
&= \E \left(D_xG(X_T)Y^{\varepsilon}_T\right) + o(\varepsilon),
\end{split}
\end{equation*}
where  we used again the continuity of the map $DG(\cdot): E \to \mathcal{L}(E)$ along with Lemma \ref{l.estimates.spike}. By adding the two terms we  conclude the proof.
\end{proof}

\begin{remark}
In the particular case in which the cost is given by \eqref{costo.integrato}
the expansion of the cost reads as
\begin{equation*}
\begin{split}
J(u^{\varepsilon}) - J(u) &= \E\int_0^T \int_{\mathcal{O}}\left[ l(t,X_t(\xi),u^\varepsilon_t) - l(t,X_t(\xi),u_t) + D_xl(t,X_t(\xi),u_t)Y^{\varepsilon}_t(\xi)\right] \mu(d\xi)dt \\
&+ \E\int_{\mathcal{O}}Dg(X_T(\xi))Y^{\varepsilon}_T(\xi) \mu(d\xi) + o(\varepsilon).
\end{split}
\end{equation*}
\end{remark}

\section{Proof of Theorem \ref{thm_1}}

The central idea in this proof is the construction of a pair of processes $(p,q)$
 by a duality argument.  We introduce the following auxiliary equation:
\begin{equation}\label{eq.y}
\begin{sistema}
dy(t) = [Ay(t) + D_xF(X(t),u(t))y(t) + \gamma(t)]dt + \eta(t)dW(t)\\
y(0) = 0,
\end{sistema}
\end{equation}
which is a generalized version of the first variation equation introduced before.
We want to study the linear map
\[ \tau: (\gamma(\cdot),\eta(\cdot))\longmapsto (y(\cdot),y(T)), \]
which assigns to the forcing terms $\gamma(\cdot), \eta(\cdot)$ the solution $y(\cdot)$ and
 its terminal value $y(T)$. For an appropriate choice of functional
 spaces, $\tau$ turns out to be well defined and continuous, and
 so is its adjoint \[ \tau^*: (f(\cdot), \zeta) \longmapsto (p(\cdot),q(\cdot)). \]
 The pair we are looking for is defined as
  $(p,q)=\tau^*(f,\zeta)$
when we choose $\zeta = D_xH(X(T))^*$ and $f(t)= D_xL(t,X(t),u(t))^*$.
The following Proposition will allow to identify the appropriate norms
for this duality argument.

\begin{prop}\label{prop.forward.duale}
Let Hypotheses  \ref{hp_eq}, \ref{hp_V1} and \ref{hp_f} be in force.
Suppose we are given $r > 2$ and  two processes  $\gamma  , \eta  $   such that $\gamma \in L_{\mathcal{F}}^r(\Omega; L^2([0,T],-\lambda;H))$ and $\eta \in L_{\mathcal{F}}^r(\Omega; L^2([0,T];\mathcal{L}_2(K,V)))$.
\begin{enumerate}
  \item The equation \eqref{eq.y}
has a unique mild solution, i.e. a progressive $E$-valued process
 $y$ such that,  $\mP$-a.s., $t\mapsto y(t)$ belongs to
  $ L^2 ([0,T];E)$ and
\begin{equation}\label{eq.y.mild}
y(t) = \int_0^t S(t-s)\big( D_xF(X(s),u(s))y(s) + \gamma(s)\big)ds + \int_0^t S(t-s)\eta(s)dW(s),
\end{equation}
  for almost every  $t \in [0,T]$.
  \item The following estimate hold:
\begin{equation}\label{estimate.forward}
\E\int_0^T\abs{y(t)}_E^2 dt \leq C\bigg[ \E \left( \int_0^T \abs{\gamma(s)}^2_H(T-s)^{-\lambda}ds \right)^{r/2} + \E \left( \int_0^T \norm{\eta(s)}^2_{L_2(K,V)}ds \right)^{r/2} \bigg]^{\frac{2}{r} }
\end{equation}
If the forcing term $\gamma(\cdot)$ belongs to $L^r(\Omega; C([0,T];H))$
then the solution is bounded in $L^2(\Omega; E)$, more precisely the following estimate holds
for every $t\in [0,T]$:
\begin{equation}\label{estimate.forward.T}
\E\abs{y(t)}_E^2 \leq C\bigg[ \E \left( \int_0^t \abs{\gamma(s)}^2_H(t-s)^{-\lambda}ds \right)^{r/2} + \E \left( \int_0^t \norm{\eta(s)}^2_{L_2(K,V)}ds\right)^{r/2}\bigg]^{\frac{2}{r}}
\end{equation}

\end{enumerate}

\end{prop}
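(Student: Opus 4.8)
The plan is to handle the two terms in the mild formulation \eqref{eq.y.mild} separately and then close an estimate via Gronwall's lemma. Write $y(t) = I_1(t) + I_2(t) + I_3(t)$, where $I_1(t) = \int_0^t S(t-s) D_xF(X(s),u(s))y(s)\,ds$ (the feedback term), $I_2(t) = \int_0^t S(t-s)\gamma(s)\,ds$ (the deterministic forcing), and $I_3(t) = \int_0^t S(t-s)\eta(s)\,dW(s)$ (the stochastic convolution). First I would treat $I_3$: since by Hypothesis \ref{hp_V1} the semigroup $S(t)$ maps $V$ into $V$ with $\norm{S(t)}_{\call(V)}\le\beta$, and $V$ is continuously embedded in $E$, a factorization argument (as in \cite{da1992stochastic}, and exactly as used in the proof of Theorem \ref{t.convolution}) gives that $I_3$ has an $E$-continuous version with $\E\sup_{t\in[0,T]}\abs{I_3(t)}_E^r \le C\,\E(\int_0^T\norm{\eta(s)}_{\call_2(K,V)}^2\,ds)^{r/2}$, using $r>2$ to apply the Burkholder–Davis–Gundy inequality in $V$ followed by Lemma \ref{l.analytic}-type bounds inside $V$. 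For $I_2$, the ultracontractivity bound \eqref{ipercontratt}, $\norm{S(t-s)}_{\call(H,E)}\le C(t-s)^{-\lambda}$, gives $\abs{I_2(t)}_E \le C\int_0^t (t-s)^{-\lambda}\abs{\gamma(s)}_H\,ds$; by Cauchy–Schwarz this is bounded by $C(\int_0^t (t-s)^{-\lambda}\,ds)^{1/2}(\int_0^t (t-s)^{-\lambda}\abs{\gamma(s)}_H^2\,ds)^{1/2}$, where the first factor is finite since $\lambda<1$, and the weight $(t-s)^{-\lambda}$ is exactly the one appearing in the norm of $L^2([0,T],-\lambda;H)$ (up to replacing $T-s$ by $t-s$, which only helps).

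Next I would set up the Gronwall argument for $I_1$. Using \eqref{crescita.DF}, $\abs{D_xF(X(s),u(s))y(s)}_E \le C_F(\omega)\abs{y(s)}_E$ with $C_F$ having all moments finite, and again $\norm{S(t-s)}_{\call(E)}\le 1$ (contraction in $E$), one gets a pathwise bound $\abs{y(t)}_E \le \abs{I_2(t)}_E + \abs{I_3(t)}_E + C_F(\omega)\int_0^t \abs{y(s)}_E\,ds$, whence by Gronwall $\sup_{t\le T}\abs{y(t)}_E \le e^{C_F(\omega)T}\big(\sup_{t\le T}\abs{I_2(t)}_E + \sup_{t\le T}\abs{I_3(t)}_E\big)$. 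The existence and uniqueness in part (1) follows from the same contraction/Gronwall scheme applied to the map $y\mapsto I_1[y] + I_2 + I_3$ in the space of $E$-valued progressive processes with $\mP$-a.s. $L^2([0,T];E)$ paths — this is standard once $I_2, I_3$ are shown to have the right regularity, and can also be quoted from \cite{cerrai2001second} as was done for \eqref{eq.first.variation}; to keep moment bounds one may also first work with the Yosida approximations $A_n$ as in the proof of Lemma \ref{l.estimates.spike} and pass to the limit.

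To get \eqref{estimate.forward} I would take $\E$ of the square (not the supremum) of the Gronwall bound: $\E\int_0^T\abs{y(t)}_E^2\,dt \le C\,\E\big[e^{2C_F T}\big(\int_0^T\abs{I_2(t)}_E^2\,dt + \int_0^T\abs{I_3(t)}_E^2\,dt\big)\big]$. Because $e^{2C_F T}$ has all moments, I apply Hölder with conjugate exponents $r/(r-2)$ and $r/2$ (here $r>2$ is essential) to split off the $e^{2C_F T}$ factor, leaving $\E(\cdots)^{r/2}$ of the two integral terms, which are controlled by the $L^r(\Omega;\cdot)$ norms of $\gamma$ and $\eta$ via the bounds on $I_2, I_3$ from the first paragraph; raising to the power $2/r$ produces the stated right-hand side. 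The estimate \eqref{estimate.forward.T} is obtained in the same way but keeping $\E\abs{y(t)}_E^2$ fixed-$t$ instead of integrating, using that when $\gamma\in L^r(\Omega;C([0,T];H))$ the term $I_2$ is pointwise-in-$t$ bounded in $E$ by $C\int_0^t(t-s)^{-\lambda}\abs{\gamma(s)}_H\,ds$, hence by $C\,\abs{\gamma}_{C([0,t];H)}$, but more precisely by the weighted $L^2$-in-time quantity appearing in \eqref{estimate.forward.T}. The main obstacle I expect is the bookkeeping of the time-weight $(T-s)^{-\lambda}$ (resp. $(t-s)^{-\lambda}$) through the factorization for $I_3$ and the convolution for $I_2$ while simultaneously carrying the random Gronwall constant $C_F(\omega)$ and splitting it off by Hölder — in particular checking that $\lambda<1$ is used exactly where needed and that the exponent $r>2$ leaves enough integrability for the Hölder split; the regularity of $I_3$ in $E$ via $V$ (rather than directly, which would fail) is the conceptual heart, and is precisely why the space $V$ was introduced.
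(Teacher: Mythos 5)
Your decomposition $y=I_1+I_2+I_3$ and the overall Gronwall strategy match the paper's proof, and your treatment of $I_2$ via the ultracontractivity bound \eqref{ipercontratt} is exactly what the paper does. There is, however, one genuine gap: in the feedback term you bound $\abs{D_xF(X(s),u(s))y(s)}_E\le C_F(\omega)\abs{y(s)}_E$ using the operator norm, so the random constant $C_F(\omega)$ ends up \emph{inside} the Gronwall exponential, and you then assert that $e^{2C_F(\omega)T}$ has all moments. That does not follow from $C_F$ having polynomial moments: by Lemma \ref{l.nemytskii}(ii), $C_F(\omega)\sim c(1+\sup_t\abs{X_t}_E^k)$, and $\sup_t\abs{X_t}_E$ has Gaussian-type tails (it is controlled by the Gaussian random variable $\sup_t\abs{W_A(t)}_E$), so $\E\, e^{cC_F}=\infty$ is entirely possible once $k\ge 2$; Hypothesis \ref{hp_f} allows any $k$. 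The paper avoids this by not using the operator norm for the homogeneous part: after passing to Yosida approximations $A_n$ it writes the left-derivative inequality $\frac{d^-}{dt}\abs{v_n(t)}_E\le \braket{A_nv_n,\delta_n}_E+\braket{T(t)v_n,\delta_n}_E+\braket{T(t)(\Gamma+W_\eta),\delta_n}_E$ and invokes the dissipativity estimate of Lemma \ref{l.nemytskii}(iii), $\braket{D_xF(x,u)h,\delta_h}_E\le c\abs{h}_E$ with a \emph{deterministic} $c$. The random constant $C_F(\omega)$ then multiplies only the forcing term, so Gronwall yields $\abs{v(t)}_E\le e^{cT}C_F(\omega)\int_0^t\abs{\Gamma(s)+W_\eta(s)}_Eds$, and the polynomial-moment random variable can be split off by H\"older exactly as you intend. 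You should replace your operator-norm step by this subdifferential/dissipativity pairing; the rest of your H\"older split with exponents $r/(r-2)$ and $r/2$ then goes through.

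Two smaller remarks. For $I_3$ you propose a factorization argument to control $\E\sup_t\abs{I_3(t)}_E^r$; this is more than is needed and requires extra justification, whereas the paper only needs $\E\int_0^T\abs{W_\eta(t)}_E^2dt$ and $\E\abs{W_\eta(t)}_E^2$, both of which follow immediately from the embedding $V\subset E$, Hypothesis \ref{hp_V1} and the It\^o isometry in the Hilbert space $V$. Also, your parenthetical claim that replacing $T-s$ by $t-s$ in the weight ``only helps'' is backwards, since $(t-s)^{-\lambda}\ge(T-s)^{-\lambda}$; the weight $(T-s)^{-\lambda}$ in \eqref{estimate.forward} is recovered only after integrating in $t$ and applying Fubini, which produces $(T-s)^{1-\lambda}\le T\,(T-s)^{-\lambda}$.
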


\begin{proof}
\textbf{Existence and uniqueness.}
Let us denote
\begin{equation*}
T(t):= D_xF(X(t),u(t)), \qquad \Gamma(t) := \int_0^tS(t-s)\gamma(s)ds, \qquad W_{\eta}(t) := \int_0^tS(t-s)\eta(s)dW(s).
\end{equation*}
We note for further use that, for $\mP$-almost every $\omega \in \Omega$,
$\sup_{t\in [0,T]}\|T(t)\|_{\mathcal{L}(E)}$ is bounded
by a constant $ C_F(\omega)$.
Then equation  \eqref{eq.y.mild} can be written:
$\mP$-a.s.,
\begin{equation}
y(t) = \int_0^t S(t-s)T(s)y(s)ds + \Gamma(t) + W_{\eta}(t) \qquad
  \text{ for almost every }  t \in [0,T].
\end{equation}
Now we want to prove that $\Gamma$ and $W_{\eta}$ belong to $L_{\mathcal{F}}^2([0,T];E)$.
\begin{equation}\label{eq.preliminary.gamma}
\begin{split}
\E \int_0^T \Big|\int_0^t S(t-s)\gamma(s)ds\Big|_E^2 dt &\leq \E \int_0^T \left( \int_0^t \abs{S(t-s)\gamma(s)}_E ds \right)^2 dt \\
&\leq C \E \int_0^T \left( \int_0^t \abs{\gamma(s)}_H(t-s)^{-\lambda} ds \right)^2 dt\\
&\leq C \E \int_0^T \left(\int_0^t \abs{\gamma(s)}^2_H (t-s)^{-\lambda} ds \cdot \int_0^t (t-s)^{-\lambda}ds \right)dt \\
&\leq C \E \int_0^T \left(\int_0^t \abs{\gamma(s)}^2_H(t-s)^{-\lambda} ds \right) dt \\
&= C \E \int_0^T \abs{\gamma(s)}_H^2 \left( \int _s^T (t-s)^{-\lambda}dt \right) ds \\
&= C \E \int_0^T \abs{\gamma(s)}_H^2 (T-s)^{1-\lambda}ds < \infty
\end{split}
\end{equation}
thanks to the fact that $\lambda <1$ and
$\gamma \in L_{\mathcal{F}}^r(\Omega;L^2([0,T],-\lambda;H))
\subset L_{\mathcal{F}}^2([0,T] ;H))$.
Regarding the stochastic convolution we have
\begin{equation}\label{eq.preliminary.eta}
\begin{split}
\E \int_0^T \Big|\int_0^t S(t-s)\eta(s)dW_s\Big|_E^2 dt &\leq C\E \int_0^T \Big|\int_0^t S(t-s)\eta(s)dW_s\Big|_V^2 dt \\
&\leq C \E\int_0^T\norm{\eta(s)}^2_{L_2(K,V)}ds < \infty
\end{split}
\end{equation}
where we have used the Ito isometry for the stochastic integral
in the Hilbert space $V$ with respect to cylindrical noise, as well as
Hypothesis \ref{hp_V1}.
If we define
\[ v(t)= y(t) - \Gamma(t) - W_{\eta}(t),\]
then equation \eqref{eq.y.mild} is equivalent  to the following: $\mP$-a.s.,
\begin{equation}\label{rand_eq_variaz_prima}
v(t) = \int_0^t S(t-s)T(s)v(s)ds + \int_0^t S(t-s)T(s)(\Gamma(s) + W_{\eta}(s))ds
\end{equation}
  for almost every  $t \in [0,T]$. Recalling that
  $\sup_{t\in [0,T]}\|T(t)\|_{\mathcal{L}(E)}\le  C_F(\omega)$ we see
  that $\mP$-a.s. $s\mapsto T(s)(\Gamma(s) + W_{\eta}(s))$
  belongs to $L^2([0,T];E)$ and then
  it can be proved by an easy contraction argument that there exists
  a unique pathwise solution $v(\omega,\cdot) \in L^2([0,T];E)$
  (in fact, $v(\omega,\cdot) \in C([0,T];E)$)
   and that, $\mP$-a.s., equality \eqref{rand_eq_variaz_prima} holds for every $t\in [0,T]$.

   By the same arguments, uniqueness of the pathwise solution $v$
   implies uniqueness for the original equation \eqref{eq.y.mild}.

\textbf{Estimates.}
Let us now define the Yosida approximations $A_n = nA(n-A)^{-1}$ of $A$ and consider the approximating equations
\begin{equation}
v_n'(t) = A_nv_n(t) + T(t)v_n(t) + T(t)(\Gamma(t) + W_\eta (t)), \qquad v(0) = 0.
\end{equation}
Then, for any $t \in [0,T]$, we have $\mP$-a.s.
\begin{equation}
\dfrac{d}{dt}^-\abs{v_n(t)}_E \leq \braket{A_nv_n(t),\delta_n(t)}_E + \braket{T(t)(v_n(t) + \Gamma(t) + W_\eta (t)),\delta_n(t)}_E
\end{equation}
where $\delta_n(t) \in \partial\abs{v_n(t)}_E$.
Using the dissipativity properties proved in Lemma   \ref{l.nemytskii}
we have $\braket{T(t)v_n(t)  ,\delta_n(t)}_E \le    c{v_n(t)}_E$ and
from the contraction property of the semigroup it follows that
$\braket{A_nv_n(t),\delta_n(t)}_E \le 0$. So
we obtain
\begin{equation}
\dfrac{d}{dt}^-\abs{v_n(t)}_E \leq c{v_n(t)}_E + C_F(\omega)\abs{(\Gamma(t) + W_\eta (t))}_E, \qquad \mP\text{-a.s.}
\end{equation}
where the constant $C_F(\omega)$ is finite for a.e. $\omega \in \Omega$. Using Gronwall's lemma
\begin{equation}
\abs{v_n(t)}_E \leq C(\omega)\int_0^t \abs{(\Gamma(s) + W_\eta (s))}_E ds, \qquad \mP\text{-a.s.}
\end{equation}
Now, it is easy to prove that  $v_n(t)\to v(t)$ in $E$ so we can take the limit as $n$ goes to infinity and we obtain
\begin{equation}
\abs{v(t)}_E \leq C(\omega)\int_0^t \abs{(\Gamma(s) + W_\eta (s))}_E ds.
\end{equation}
Recalling the definition of $v(t)$ we get
\begin{equation}
\begin{split}
\abs{y(t)}_E &\leq C(\omega)\int_0^t \abs{(\Gamma(s) + W_\eta (s))}_E ds + \abs{\Gamma(t)}_E + \abs{W_\eta (t)}_E \\
&\leq C(\omega)\left( \abs{\Gamma(t)}_E + \abs{W_\eta (t)}_E \right), \qquad \mP\text{-a.s.}
\end{split}
\end{equation}
To prove estimate \eqref{estimate.forward} we have to consider
\begin{equation*}
\begin{split}
\E \int_0^T \abs{y(t)}_E^2 dt &\leq \E \left[ C(\omega)^2 \int_0^T \left( \abs{\Gamma(t)}_E + \abs{W_\eta (t)}_E \right)^2 dt\right] \\
&\leq \left[\E (C(\omega)^{2r'}) \right]^{1/r'} \left[ \E \left( \int_0^T \left( \abs{\Gamma(t)}_E^2 + \abs{W_\eta (t)}^2_E \right) dt \right)^{r}\right]^{1/r} \\
&\leq K  \left[ \E \left( \int_0^T \abs{\Gamma(t)}_E^2 + \abs{W_\eta (t)}^2_E dt \right)^r \right]^{1/r}
\end{split}
\end{equation*}
where we chose $r \geq 2$ and we used the finiteness of the moments of $C(\omega)$. Using the same computations as in \eqref{eq.preliminary.gamma} and \eqref{eq.preliminary.eta} we end up with
\begin{equation*}
\left( \E \int_0^T\abs{y(t)}_E^2 dt \right)^{\frac{1}{2}} \leq C\Big[ \E \left( \int_0^T \abs{\gamma(s)}^2_H(T-s)^{-\lambda}ds \right)^{r/2} + \E \left( \int_0^T \norm{\eta(s)}^2_{L_2(K,V)}ds \right)^{r/2} \Big]^{\frac{1}{r}}
\end{equation*}
which is exactly what we looked for.

Assuming now  that $\gamma \in L^r(\Omega; C([0,T];H))$ then  we have
\begin{equation*}
\begin{split}\abs{\Gamma(t)}_E^2=
\Big|\int_0^t S(t-s)\gamma(s)ds\Big|_E^2 &\leq \left( \int_0^t \abs{S(t-s)\gamma(s)}_E ds \right)^2\\
&\leq C\left( \int_0^t \abs{\gamma(s)}_H(t-s)^{-\lambda} ds \right)^2\\
&\leq C\int_0^t \abs{\gamma(s)}^2_H(t-s)^{-\lambda} ds \cdot \int_0^t (t-s)^{-\lambda}ds\\
&\leq C \int_0^t \abs{\gamma(s)}^2_H(t-s)^{-\lambda} ds
\end{split}
\end{equation*}
thanks to the fact that $\lambda < 1$.
Regarding the stochastic convolution we have
\begin{equation*}
\begin{split}
\E\abs{W_\eta(t)}_E^2=\E \Big|\int_0^t S(t-s)\eta(s)dW(s)\Big|^2_E &\leq C\cdot\E \Big|\int_0^t S(t-s)\eta(s)dW(s)\Big|^2_V \\
&\leq C\cdot \E\int_0^t\norm{\eta(s)}^2_{L_2(K,V)}ds.
\end{split}
\end{equation*}
Then we have
\begin{equation*}
\begin{split}
\E\abs{y(t)}_E^2 &\leq \E \left( C(\omega)^2 \left( \abs{\Gamma(t)}_E^2 + \abs{\eta(t)}_E^2\right) \right) \\
&\leq \left[\E (C(\omega)^{2r'}) \right]^{1/r'} \left[ \E \left( \abs{\Gamma(t)}_E^2 + \abs{W_\eta (t)}^2_E \right)^{r}\right]^{1/r} \\
&\leq K  \left[ \E \left( \abs{\Gamma(t)}_E^2 + \abs{W_\eta (t)}^2_E \right)^r \right]^{1/r}.
\end{split}
\end{equation*}
The inequality
\eqref{estimate.forward.T}
now follows immediately.
\end{proof}
By Proposition \ref{prop.forward.duale},   $\tau$ is a bounded linear operator   from the space $L_{\mathcal{F}}^r(\Omega; L^2([0,T],-\lambda; H)) \times L_{\mathcal{F}}^r(\Omega; L^2([0,T]; L_2(K,V)))$ to $L^2_{\mathcal{F}}(\Omega\times [0,T]; E)\times L^2_{\mathcal{F}_T}(\Omega; E)$.
Its  dual operator $\tau^*$  is also a bounded linear map
\begin{equation*}
\tau^*: L^2_{\mathcal{F}}(\Omega\times [0,T]; E)' \times L^2_{\mathcal{F}_T}(\Omega; E)' \longrightarrow L_{\mathcal{F}}^{r'}(\Omega; L^2([0,T],-\lambda; H')) \times L_{\mathcal{F}}^{r'}(\Omega; L^2([0,T]; L_2(K,V)))
\end{equation*}
where $1/r + 1/r' = 1$. We have used the fact that $[L^2_{\mathcal{F}}(\Omega\times[0,T],\lambda; H)]' = L^2_{\mathcal{F}}(\Omega\times [0,T],-\lambda; H')$ and that the dual of $L_2(K,V)$ can be identified with $L_2(K,V')$ (see e.g. \cite{aubin2011applied}, page 291).
\begin{remark} {\em If $B$ is a separable and reflexive Banach space, then the dual of $L^2_{\mathcal{F}}(\Omega\times[0,T]; B)$ is $L^2_{\mathcal{F}}(\Omega\times[0,T]; B')$ (cfr. e.g. \cite{joseph1977vector}). In our case, $E$ is the space of real continuous functions,
 so this result no longer holds. However it is still true that $L^2_{\mathcal{F}}(\Omega\times[0,T]; B') \subset L^2_{\mathcal{F}}(\Omega\times [0,T]; B)'$. For our purposes, this is sufficient because we need only to evaluate $\tau^*$ at $f$  and $\zeta$ which are much more regular.
}
\end{remark}
\begin{remark}{\em
A priori it could be possible to choose $\gamma$ with values in $E'$ instead of   $H$. However
this would introduce $\left[ L_{\mathcal{F}}^r(\Omega; L^2([0,T]; E))\right]'$, which is difficult to treat.
}
\end{remark}
We note that by definition the following equality holds
\begin{equation}\label{duality}
\E\int_0^T {}_{H'}\braket{p(t),\gamma(t)}_{H}dt + \E\int_0^T \braket{q(t),\eta(t)}_{L_2(K,V)}dt \\
= \E\int_0^T \langle f(t),y(t)\rangle_{E}dt + \E\langle \zeta,y(T)\rangle_{E},
\end{equation}
where $(p,q)=\tau^*(f,\zeta)$, and $(y(\cdot), y(T))$ are the solution process
and its terminal value of equation
\eqref{eq.y.mild} corresponding to forcing terms $(\gamma,\eta)$.

\begin{proof}[Proof of Theorem \ref{thm_1}]
 Since $u$ is an optimal control we have $J(u^{\varepsilon}) - J(u) \geq 0$. Thanks to the estimates given in Lemma \ref{l.estimates.spike}, we already obtained in Proposition \ref{prop.cost.expansion} that
\begin{equation}\label{intermedio}
o(\varepsilon) \leq \E\int_0^T \left[ \delta^{\varepsilon}L(t) + D_xL(t,X(t),u(t))Y^{\varepsilon}_t\right]dt + \E \left( D_xG(X_T)Y^{\varepsilon}_T(x) \right).
\end{equation}
Now we use duality (more precisely we use equation \eqref{duality} with $\eta = 0$) for the first variation equation \eqref{eq.first.variation}, which reads as
\begin{equation}
\E \braket{D_xG(X(T)),Y^{\varepsilon}(T)}_E =
 \E \int_0^T \left[ {}_{H'}\braket{p(t),\delta^{\varepsilon}F(t)}_H - \braket{D_xL(t,X(t),u(t)),Y^{\varepsilon}(t)}_E\right]dt.
\end{equation}
Here we have to check that $\delta^{\varepsilon}F \in L_{\mathcal{F}}^r(\Omega; L^2(([0,T],-\lambda; H))$ but this is true thanks to Hypothesis \ref{hp_f}.
Substituting in \eqref{intermedio} we get
\begin{equation}
o(1) \leq \dfrac{1}{\varepsilon}\E\int_{t_0}^{t_0 + \varepsilon}
\left[\mathcal{H}(t,X(t),u^\varepsilon(t),p(t)) - \mathcal{H}(t,X(t),u(t),p(t))\right]dt .
\end{equation}
Now the proof can be concluded by usual arguments, see for instance
\cite{yong1999stochastic}.
\\
\end{proof}

\section{The adjoint equation and the proof of Theorem \ref{thm_2}}
In this section we study a backward stochastic partial differential equation (BSPDE)
and we  characterize the adjoint processes $(p,q)$ as its unique solution.
This leads immediately to the proof of Theorem \ref{thm_2}.
While the process $p$ takes values in $E'$, it is not easy to formulate
the BSPDE as an equation in this space, since the usual tools of stochastic
calculus are not available there, in particular
there is no  version of the martingale representation theorem in $E'$.
We will therefore
embed $E'$ in a bigger Hilbert space in which we can use many standard
techniques and we can uniquely solve the BSPDE in a mild formulation.
Then we will prove that  the solution is indeed  more regular and in particular
that $p$ takes values in $E'$ (and even in $H'$) as desired.

Recall that if we identify $H \simeq H'$, we can embed the Gelfand triple $ E \subset H' \subset E'$
in a Hilbertian triple $ V \subset H' \subset V'$ and we obtain the following dense continuous inclusions
\[ V \subset E \subset H\simeq H' \subset E' \subset V',\]
where now we suppose that Hypothesis \ref{hp_V2} holds true.
We first write the dual BSPDE in a formal way as follows
\begin{equation}\label{formal}
\begin{sistema}
-dp(t) = \left[ A'p(t) + D_xF(X(t),u(t))'p(t)+ f(t))\right]dt - q(t)dW(t) \\
p(T) = \zeta,
\end{sistema}
\end{equation}
where $f(t):= D_xL(t,X(t),u(t))^*$ and $\zeta := D_xG(X(T))^*$
take values in $E'$.
This equation is given the following precise meaning in a mild formulation:
 for all $t \in [0,T]$, $\mP$-a.s.
\begin{equation}\label{eq.backward}
\begin{split}
p(t) = & S(T-t)'\zeta + \int_t^T S(s-t)'\left[ D_xF(X(s),u(s))^*p(s)+ f(s)\right]ds \\
&- \int_t^T S(s-t)^{\sim}q(s)dW(s),
\end{split}
\end{equation}
where  $S(t)':E'\to  H'$ is the adjoint of  $S(t):H\to  E$ (compare Hypothesis \ref{hp_eq}-1)
and
 $S(t)^{\sim}: V' \rightarrow V' $ is the adjoint of $S(t)$ viewed as an operator
 from $V$ to $V$, which is possible thanks to Hypothesis \ref{hp_V1}. On the other hand,
 $T^*:E'\to E'$ denotes the dual of any bounded linear operator $T:E\to E$.
Note that equality  \eqref{eq.backward} has a
 meaning in  the space $V'$, in which the stochastic convolution takes values.
\begin{remark}{\em
Notice that, with the previous notation, the adjoint of $S(t): H \rightarrow H$
(i.e., $S(t)$ viewed as an operator on $H$ rather than from $H$ to $E$)
coincides with
  $S(t)'|_{H'} :H'\to H'$, the restriction of $S(t)':E'\to H'$ to $H'\subset E'$.
}
\end{remark}
\begin{remark}{\em
Notice that the BSPDE is linear in $p$, but the map
$(t,\omega) \mapsto D_xF(X(t),u(t))^*: E' \rightarrow E'$ is not bounded.
Using the Nemytskii characterization it is easy to see that $D_xF(x,u)'$ acts on measures
 as multiplication by a density. Indeed, for any $v \in E$
\begin{equation}\label{eq.DF'}
\begin{split}
\braket{D_xF(x,u)^*p,v}_E &= \braket{p,D_xF(x,u)v}_E \\
& = \int_{\mathcal{O}} f'(x(\xi),u)v(\xi)dp(\xi)\\
&= \int_{\mathcal{O}} v(\xi)\cdot f'(x(\xi),u)dp(\xi)\\
\end{split}	
\end{equation}
which means that it maps $p$ into $ f'(x,u)p$.
It is worth noting that also the terminal condition lives in $E'$, as well as the forcing term $D_xL(t,X(t),u(t))^*$.
}
\end{remark}
Now we can state the following
\begin{thm}\label{t.ex_uniq_BSPDE}
Let Hypotheses \ref{hp_eq}, \ref{hp_V2}, \ref{hp_cost}, \ref{hp_f} hold
and $r'\in (1,2)$. Then there exists a unique mild solution $(p,q) \in L^{r'}_{\mathcal{F}}(\Omega; L^2(([0,T],\lambda;H'))\times L^{r'}_{\mathcal{F}}(\Omega; L^2(([0,T];\mathcal{L}_2(K,V')))$ to the BSPDE \eqref{eq.backward}.
\end{thm}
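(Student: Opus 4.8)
The plan is to solve the mild equation \eqref{eq.backward} by a Banach fixed-point argument that moves the unbounded zeroth-order coefficient into the forcing term. On the space $\mathcal{K}:=L^{r'}_{\mathcal{F}}(\Omega; L^2([0,T],\lambda;H'))\times L^{r'}_{\mathcal{F}}(\Omega; L^2([0,T];L_2(K,V')))$ define $\Phi(\bar p,\bar q):=(p,q)$, where $(p,q)$ is the unique mild solution of the \emph{linear} BSPDE with terminal value $\zeta$ and forcing $g(s):=D_xF(X(s),u(s))^*\bar p(s)+f(s)$, i.e.
\[
p(t)=S(T-t)'\zeta+\int_t^T S(s-t)'g(s)\,ds-\int_t^T S(s-t)^{\sim}q(s)\,dW(s).
\]
A fixed point of $\Phi$ in $\mathcal{K}$ is exactly a mild solution of \eqref{eq.backward}, and uniqueness of the fixed point yields uniqueness of the solution.

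\textbf{Step 1: the linear equation.} First one must show $\Phi$ is well defined, i.e. solve, for a forcing $g$ which is the sum of an $H'$-valued term (carrying the weight $(T-t)^{\lambda}$) and an $E'$-valued term with moments of all orders, and a terminal value $\zeta\in L^{r'}_{\mathcal{F}_T}(\Omega;E')$, the equation $-dp=[A'p+g]\,dt-q\,dW$, $p(T)=\zeta$, in the mild sense. Since there is no martingale representation theorem in $E'$, I would work in the Hilbert space $V'$: reduce to a zero terminal value by the substitution $\tilde p(t)=p(t)-S(T-t)'\zeta$, then replace $A$ by its Yosida approximation $A_n=nA(n-A)^{-1}$, so that the resulting equation is a Lipschitz BSDE with values in $V'$, uniquely solvable by the classical theory (martingale representation in $V'$ plus a contraction). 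Bounds on $(p_n,q_n)$ uniform in $n$, in the norm of $\mathcal{K}$, follow from the It\^o formula for $|p_n(t)|^2_{H'}$ together with: the dual ultracontractivity $\|S(t)'\|_{\mathcal{L}(E',H')}=\|S(t)\|_{\mathcal{L}(H,E)}\le Ct^{-\lambda}$ from \eqref{ipercontratt}; the contractivity $\|S(t)'\|_{\mathcal{L}(H')}\le1$ and $\|S(t)^{\sim}\|_{\mathcal{L}(V')}\le\beta$; the It\^o isometry in $V'$ for the stochastic convolution; Young's inequality (or Cauchy--Schwarz) for the time convolutions against the singular kernel $t^{-\lambda}$ and the weight $(T-t)^{\lambda}$ (this is where $\lambda<1$ is used, as in \eqref{eq.preliminary.gamma}); and the elementary bound $\E\big(\int_0^T|S(T-t)'\zeta|_{H'}^2(T-t)^{\lambda}\,dt\big)^{r'}\le C\,\E|\zeta|_{E'}^{2r'}<\infty$, which both motivates the weight and produces the announced blow-up of $p$ as $t\to T$. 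Letting $n\to\infty$ gives the mild solution in $\mathcal{K}$, and the same estimates applied to the difference of two solutions give uniqueness for the linear problem. Note that, although $q$ is only $V'$-valued, $p$ is $H'$-valued: conditioning the mild formula on $\mathcal{F}_t$ kills the stochastic convolution (its integrand $s\mapsto S(s-t)^{\sim}q(s)$ is adapted on $[t,T]$), so $p(t)=\E\big[S(T-t)'\zeta+\int_t^TS(s-t)'g(s)\,ds\mid\mathcal{F}_t\big]$, and each term on the right is in $H'$ with an integrable singularity.

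\textbf{Step 2: the fixed point.} With $\Phi$ well defined, I would show it is a contraction on a short interval $[T-\delta,T]$ and then patch over finitely many such intervals (equivalently, use the equivalent norm weighted by $e^{-\rho t}$ for $\rho$ large). The only data-dependent ingredient is $g$: by \eqref{eq.DF'}, $D_xF(X(s),u(s))^*$ is multiplication by $f'(X(s),u(s))$, which by Lemma \ref{l.nemytskii}\,(ii) and \eqref{polygrowth} is a bounded operator on $H'$ with random norm $\le C_F(\omega)\le c(1+\sup_t|X(t)|_E^k)$, and $C_F$ has finite moments of every order because $X\in L^p_{\mathcal{F}}(\Omega;C([0,T];E))$ for all $p$. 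The linear estimates of Step 1 then bound $\|\Phi(\bar p^1,\bar q^1)-\Phi(\bar p^2,\bar q^2)\|_{\mathcal{K}}$ by $[\E C_F^{2r'}]^{1/r'}$ times the $\mathcal{K}$-norm of the difference, with a prefactor that is small for $\delta$ small; the conjugacy $1/r+1/r'=1$ with $r>2$, $r'\in(1,2)$ is exactly what lets H\"older's inequality separate the polynomially growing $C_F$ from the rest. Banach's theorem then gives existence and uniqueness of $(p,q)\in\mathcal{K}$.

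\textbf{The main obstacle.} I expect Step 1 to be the crux: carrying out the linear estimates so that $p$ genuinely lands in $H'$ with the $(T-t)^{\lambda}$-weighted integrability, even though the stochastic calculus and $q$ live only in $V'$ while the coefficient $D_xF^*$ acts well only on $H'$. This forces one to marry the Hilbert-space analysis in $V'$ (needed for $q$ via martingale representation) with a regularity bootstrap built on the dual smoothing $S(t)':E'\to H'$ and the conditional-expectation identity above; keeping track of the singular kernels $t^{-\lambda}$ under time convolution against the weight $(T-t)^{\lambda}$ and the exponent $r'<2$ is the technical heart of the argument. Existence requires only Hypothesis \ref{hp_V1} (so that $S(t)^{\sim}$ acts on all of $V'$), whereas Hypothesis \ref{hp_V2} is needed in the uniqueness part for \eqref{eq.backward}.
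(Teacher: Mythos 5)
Your overall architecture --- freeze the zeroth-order term, solve the resulting linear BSPDE, and close with a Banach fixed point on $\mathcal{K}=L^{r'}_{\mathcal{F}}(\Omega;L^2([0,T],\lambda;H'))\times L^{r'}_{\mathcal{F}}(\Omega;L^2([0,T];\mathcal{L}_2(K,V')))$ --- breaks down at the contraction step, and the failure is structural rather than technical. The coefficient $D_xF(X(s),u(s))^*$ is multiplication by $f'(X(s,\cdot),u(s))$, whose operator norm on $H'$ is bounded only by the \emph{random} constant $C_F(\omega)\le c(1+\sup_t\abs{X(t)}_E^k)$ of \eqref{crescita.DF}: it has all moments but is not uniformly bounded in $\omega$. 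Consequently $\bar p\mapsto D_xF^*\bar p$ does not map $L^{r'}_{\mathcal{F}}(\Omega;L^2([0,T],\lambda;H'))$ into itself: the H\"older step you invoke to ``separate the polynomially growing $C_F$ from the rest'' necessarily raises the $\omega$-integrability demanded of the input (you control an $L^{r'}(\Omega)$ norm of $\Phi(\bar p^1,\bar q^1)-\Phi(\bar p^2,\bar q^2)$ only by an $L^{r'b}(\Omega)$ norm of $\bar p^1-\bar p^2$ with $b>1$), so $\Phi$ is not a self-map of $\mathcal{K}$, let alone a contraction. Shrinking the interval to $[T-\delta,T]$ or using an $e^{-\rho t}$-weighted norm does not help, because the obstruction lives in the $\omega$-variable, not in time; and the pathwise Gronwall argument that absorbs random constants for \emph{forward} equations is unavailable here, since the backward equation cannot be solved $\omega$-by-$\omega$ (adaptedness of $q$ is a global constraint). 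This loss of stochastic integrability is precisely why the theorem produces a solution only in $L^{r'}$ with $r'<2$ from $L^2$-type data: the solution operator genuinely degrades the exponent, which is incompatible with iteration.

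The paper avoids the fixed point altogether. It regularizes on two levels: the data ($\zeta^\varepsilon=S(\varepsilon)'\zeta$, $f^\varepsilon=S(\varepsilon)'f$, which then take values in $H'$) and, crucially, the nonlinearity, replacing $F$ by its Yosida approximations $F_\alpha$ of Lemma \ref{l.yosida}, for which $\norm{D_xF_\alpha}_{\mathcal{L}(H)}\le k_\alpha$ with a \emph{deterministic} constant; the approximate BSPDE with bounded coefficient $T_\alpha(t)^*$ is then solvable in $H'$ by the classical Hilbert-space result of \cite{hu1991adapted}. The uniform-in-$(\varepsilon,\alpha)$ bound in the $\mathcal{K}$-norm is obtained not by It\^o's formula on the backward equation --- which is also problematic in your Step 1, since $q$ lives only in $\mathcal{L}_2(K,V')$ while $\zeta$ and $f$ live only in $E'$, so the pairings needed to expand $\abs{p_n(t)}^2_{H'}$ are not defined --- but by \emph{duality} against the forward equation of Proposition \ref{prop.forward.duale}, testing against $\gamma$ and $\eta$ ranging over unit balls of the predual spaces; the solution is finally extracted as a weak limit along a sequence $(\varepsilon_k,\alpha_k)\to(0,0)$. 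Your conditional-expectation identity for recovering the $H'$-valuedness of $p$, and your closing observation that Hypothesis \ref{hp_V1} suffices for existence while Hypothesis \ref{hp_V2} is needed for uniqueness, are both correct and consistent with the paper, but they do not repair the contraction step.
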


\begin{proof}
\textbf{Existence.}\\
STEP 1: (Regularization) We shall construct a solution to equation \eqref{eq.backward} by means of an approximation technique. The main tool we use is the smoothing effect of the semigroup $S(t)':E' \to H'$. If we define
\[ \zeta^{\varepsilon} := S(\varepsilon)'\zeta, \qquad f^{\varepsilon}(t) := S(\varepsilon)'f(t),\]
then both $\zeta^{\varepsilon}, f^{\varepsilon}(t)$ take values in $H'$, for all $\varepsilon>0$.
We also introduce the Yosida approximations $f_{\alpha}$ of $f$. Thanks to Lemma \ref{l.yosida} we know that the associated Nemytskii operator $F_{\alpha}(x,u)$ is Lipschitz in $x$ with
respect to the norm of $H$, hence $\norm{D_xF_{\alpha}}_{\mathcal{L}(H)}$
is uniformly bounded by some costant $k_{\alpha}$ which depends only on $\alpha$.
If we set $T_{\alpha}(t): = D_xF_{\alpha}(X(t),u(t))$ then its adjoint
$T_{\alpha}(t)^*$ is a bounded process with values in $\mathcal{L}(H')$.
Next we introduce an approximate BSPDE that we first write in a formal way as
\begin{equation}\label{BSDE.approx}
\begin{sistema}
-dp^{\varepsilon,\alpha}(t) = \left[ A'p^{\varepsilon,\alpha}(t) + T_{\alpha}(t)^*p^{\varepsilon,\alpha}(t)+ f^{\varepsilon}(t))\right]dt - q^{\varepsilon,\alpha}(t)dW(t) , \\
p^{\varepsilon,\alpha}(T) = \zeta^{\varepsilon}.
\end{sistema}
\end{equation}
By the result in \cite{hu1991adapted} there exists a unique mild solution to this equation, i.e. a  process
\[(p^{\varepsilon,\alpha},q^{\varepsilon,\alpha}) \in L_{\mathcal{F}}^2(\Omega\times
[0,T],H') \times L_{\mathcal{F}}^2([0,T],\mathcal{L}_2(K,H')),\]
such that for all $t>0$, $\mP$-a.s
\begin{equation}
 p^{\varepsilon,\alpha}(t)  = S(T-t)'\zeta^{\varepsilon} + \int_t^T S(s-t)'\big[T_{\alpha}(s)^*p^{\varepsilon,\alpha}(s) +f^{\varepsilon}(s)\big]ds  +\int_t^T S(s-t)' q^{\varepsilon,\alpha}(s)dW(s).
\end{equation}
Our aim now is to prove a uniform estimate for the approximate solution via a duality argument.\\
STEP 2: (Duality) For all $\gamma \in L_{\mathcal{F}}^r(\Omega; C([0,T],H))$ and $\eta \in L_{\mathcal{F}}^r(\Omega; L^2([0,T],\mathcal{L}_2(K,V)))$ let us consider the equation
\[ y^{\alpha}(t)  = \int_0^t S(t-s)\big[T_{\alpha}(s)y^{\alpha}(s) +\gamma(s)\big]ds  +\int_0^t S(t-s)\eta(s)dW(s), \qquad t \in [0,T], \]
which has a unique mild solution $y^{\alpha}(t)$ in the sense of
 Proposition \ref{prop.forward.duale}.  For all $n \in \mathbb{N}$,
 let $A_n = nA(n-A)^{-1}$ denote
  the Yosida approximations of the operator $A$ and $S_n(t) := e^{tA_n}$. Then, in a similar way,
  we can find a solution to
\[ y_n^{\alpha}(t)  = \int_0^t S_n(t-s)\big[T_{\alpha}(s)y_n^{\alpha}(s) +\gamma(s)\big]ds  +\int_0^t S_n(t-s)\eta(s)dW(s). \]
Again by   \cite{hu1991adapted} there exists a unique solution to
  the  equation
\[ p^{\varepsilon,\alpha}_n(t)  = S_n(T-t)'\zeta^{\varepsilon} + \int_t^T S_n(T-t)'\big[T_{\alpha}(s)^*p^{\varepsilon,\alpha}_n(s) +f^{\varepsilon}(s)\big]ds  +\int_t^T S_n(T-t)'q^{\varepsilon,\alpha}(s)dW(s), \]
in the space $L_{\mathcal{F}}^2(\Omega\times
[0,T],H') \times L_{\mathcal{F}}^2([0,T],\mathcal{L}_2(K,H'))$. Notice that,
since $A_n$ are bounded operators, the equations
 can now be written in the stronger form
\begin{equation*}
\begin{sistema}
dy^{\alpha}_n(t) = \big[ A_n y^{\alpha}_n(t) + T_{\alpha}(t)y_n^{\alpha}(t) +\gamma(t)\big]dt +  \eta(t)dW(t) ,\\
y^{\alpha}_n(0) = 0;
\end{sistema}
\end{equation*}
\begin{equation*}
\begin{sistema}
- dp^{\varepsilon, \alpha}_n(t) = \big[ A_n^* p^{\varepsilon, \alpha}_n(t) + T_{\alpha}(t)^*p^{\varepsilon,\alpha}_n(t) +f^{\varepsilon}(t)\big]dt - q^{\varepsilon, \alpha}(t)dW(t) , \\
p^{\varepsilon, \alpha}_n(T) = \zeta^{\varepsilon},
\end{sistema}
\end{equation*}
where $A_n^*:H'\to H'$ is understood as the adjoint of $A_n:H\to H$.
Computing the Ito
differential
$d(_{H}\braket{y_n^{\alpha}(t),p_n^{\varepsilon,\alpha}(t)}_{H'})$ and letting $n \rightarrow \infty$ we get by standard arguments (see e.g. \citep{tessitore1996existence}):
\begin{equation}
\begin{split}
&\E\int_0^T {}_{H'}\braket{p^{\varepsilon,\alpha}(t),\gamma(t)}_{H}dt + \E\int_0^T \braket{q^{\varepsilon,\alpha}(t),\eta(t)}_{\mathcal{L}_2(K,H)}dt \\
&= \E\int_0^T {}_{H'}\braket{f^{\varepsilon}(t),y^{\alpha}(t)}_{H}dt + \E {}_{H'}\braket{\zeta^{\varepsilon},y^{\alpha}(T)}_{H}.
\end{split}
\end{equation}
Since, by  Proposition \ref{prop.forward.duale},
 $y^\alpha{}(t)$ takes values in $E$ we also have
\begin{equation}\label{eq.duality.approx}
\begin{split}
&\E\int_0^T {}_{H'}\braket{p^{\varepsilon,\alpha}(t),\gamma(t)}_{H}dt + \E\int_0^T \braket{q^{\varepsilon,\alpha}(t),\eta(t)}_{\mathcal{L}_2(K,H)}dt \\
&= \E\int_0^T {}_{E'}\langle f^{\varepsilon}(t),y^{\alpha}(t)\rangle_{E}dt + \E {}_{E'}\langle \zeta^{\varepsilon},y^{\alpha}(T)\rangle_{E}.
\end{split}
\end{equation}
Let us now define the set $\mathcal{A} := \lbrace \gamma \in L^r_{\mathcal{F}}(\Omega; L^2([0,T],-\lambda, H)): \norm{\gamma}_{L^r_{\mathcal{F}}(\Omega; L^2([0,T],-\lambda, H))} \leq 1 \rbrace$ and $\mathcal{\tilde{A}} := \lbrace \gamma \in L^r_{\mathcal{F}}(\Omega; L^2([0,T],-\lambda, H)) \cap L_{\mathcal{F}}^r(\Omega; C([0,T],H)): \norm{\gamma}_{L^r_{\mathcal{F}}(\Omega; L^2([0,T],-\lambda, H))} \leq 1, \rbrace$.
It easy to see that $\mathcal{\tilde{A}}$ is densely embedded in $\mathcal{A}$.
If we take $\eta = 0$ in \eqref{eq.duality.approx} we get
\begin{equation*}
\begin{split}
&\left( \E\left( \int_0^T\abs{p^{\varepsilon,\alpha}(t)}^2_{H'}(T-t)^{\lambda}dt \right)^{r'/2} \right)^{1/r'} \\
&\leq \sup_{\gamma \in \mathcal{A}} \left[ \E\int_0^T \langle f^{\varepsilon}(t),y^\alpha(t)\rangle_{E}dt + \E \langle \zeta^{\varepsilon},y^\alpha(T)\rangle_{E} \right]\\
&= \sup_{\gamma \in \mathcal{\tilde{A}}} \left[ \E\int_0^T \langle f^{\varepsilon}(t),y^\alpha(t)\rangle_{E}dt + \E \langle \zeta^{\varepsilon},y^\alpha(T)\rangle_{E} \right]\\
&\leq \sup_{\gamma \in \mathcal{\tilde{A}}} \left[ \left(\E\int_0^T \abs{f^{\varepsilon}(t)}_{E'}^2 dt\right)^{1/2}\left(\E\int_0^T \abs{y^\alpha(t)}_{E}^2 dt\right)^{1/2}\right]\\
&+ \sup_{\gamma \in \mathcal{\tilde{A}}} \left[ \left(\E\abs{\zeta^{\varepsilon}}_{E'}^2\right)^{1/2} \left(\E\abs{y^\alpha(T)}_{E}^2\right)^{1/2}\right]\\
&\leq C\left[ \left(\E\int_0^T \abs{f^{\varepsilon}(t)}_{E'}^2 dt\right)^{1/2} + \left(\E\abs{\zeta^{\varepsilon}}_{E'}^2\right)^{1/2} \right]
\end{split}
\end{equation*}
where we used H\"{o}lder's inequality and the estimates \eqref{estimate.forward} and \eqref{estimate.forward.T} with $t = T$ of Proposition \ref{prop.forward.duale} for the process $y^{\alpha}(t)$. Then, noting that $\abs{f^{\varepsilon}}_{E'} \leq \abs{f}_{E'}$ and $\abs{\zeta^{\varepsilon}}_{E'} \leq \abs{\zeta}_{E'}$, we obtain
\begin{equation*}
\left( \E \left(\int_0^T\abs{p^{\varepsilon,\alpha}(t)}^2_{H'}(T-t)^{\lambda}dt\right)^{r'/2}\right)^{1/r'} \leq C\left[ \E\int_0^T \abs{f(t)}_{E'}^2 dt  + \E\abs{\zeta}_{E'}^2\right]^{1/2}
\end{equation*}
where $C$ does not depend on $\varepsilon$ and $\alpha$. Let us now
look for a similar estimate on $q^{\varepsilon,\alpha}(t)$. We first
recall   that if $(e_k)_{k \in \mathbb{N}}$ is a complete orthonormal system of $H$ such that $e_k \in V$ for all $k \in \mathbb{N}$, we have
\begin{equation}
\begin{split}
\braket{q^{\varepsilon,\alpha}(t),\eta(t)}_{\mathcal{L}_2(K,H)} &= \sum_{k} \braket{q^{\varepsilon,\alpha}(t)e_k, \eta(t)e_k}_{H} \\
&= \sum_{k} {}_{V'}\langle q^{\varepsilon,\alpha}(t)e_k, \eta(t)e_k\rangle_{V}\\
&= {}_{\mathcal{L}_2(K,V')}\langle q^{\varepsilon,\alpha}(t), \eta(t)\rangle_{\mathcal{L}_2(K,V)}.
\end{split}
\end{equation}
Hence we can identify $\mathcal{L}_2(K,V)'$ with the space $\mathcal{L}_2(K,V')$: for a detailed proof see \citep{aubin2011applied}, page 291. Writing now \eqref{eq.duality.approx}
 with $\gamma = 0$,
letting $\mathcal{B}: = \lbrace \eta \in L^{r'}_{\mathcal{F}}(\Omega;L^2([0,T],\mathcal{L}_2(K,V))) : \norm{\eta}_{L^{r'}_{\mathcal{F}}(\Omega;L^2([0,T],\mathcal{L}_2(K,V)))} \le 1\rbrace$
 and recalling the estimate \eqref{estimate.forward}  and \eqref{estimate.forward.T} with $t = T$ we get
\begin{equation*}
\begin{split}
&\left( \E\left( \int_0^T\abs{q^{\varepsilon,\alpha}(t)}^2_{\mathcal{L}_2(K,V')}dt \right)^{r'/2} \right)^{1/r'} \\
&\leq \sup_{\eta \in \mathcal{B}} \left[ \E\int_0^T  {}_{E'} \langle f^{\varepsilon}(t),y^{\alpha}(t)\rangle_{E}dt + \E {}_{E'}\langle \zeta^{\varepsilon},y^\alpha(T)\rangle_{E} \right]\\
&\leq \sup_{\eta \in \mathcal{B}}  \left[ \left(\E\int_0^T \abs{f^{\varepsilon}(t)}_{E'}^2 dt\right)^{1/2}\left(\E\int_0^T \abs{y^\alpha(t)}_{E}^2 dt\right)^{1/2} \right]\\
&+ \sup_{\eta \in \mathcal{B}}  \left[  \left(\E\abs{\zeta^{\varepsilon}}_{E'}^2\right)^{1/2} \left(\E\abs{y^\alpha(T)}_{E}^2\right)^{1/2} \right]\\
&\leq C\left[ \left(\E\int_0^T \abs{f^{\varepsilon}(t)}_{E'}^2 dt\right)^{1/2} + \left(\E\abs{\zeta^{\varepsilon}}_{E'}^2\right)^{1/2} \right]
\end{split}
\end{equation*}
Summarizing,
 we have obtained the following  estimate
\begin{equation}\label{eq.approx.duality}
\norm{p^{\varepsilon,\alpha}}_{L^{r'}_{\mathcal{F}}(\Omega;L^2([0,T],\lambda,H'))}+ \norm{q^{\varepsilon,\alpha}}_{L^{r'}_{\mathcal{F}}(\Omega;L^2([0,T],\mathcal{L}_2(K,V')))}   \leq C\left[ \E\int_0^T \abs{f(t)}_{E'}^2 dt  + \E\abs{\xi}_{E'}^2\right].
\end{equation}
STEP 3: (Convergence) By \eqref{eq.approx.duality}  there exists a sequence
$(\varepsilon_k,\alpha_k) \to (0,0)$, as $k \to \infty$, such that
$ (p^{\varepsilon_k,\alpha_k},q^{\varepsilon_k,\alpha_k})=:(p_k,q_k) $ converges
to a limit  $(p,q)$
 weakly in the product space. Setting for brevity
$T(t)= D_xF(X(t),u(t))$,
we will show that $(p,q)$ is a mild solution to equation
\eqref{eq.backward}, which we now write in the form
\begin{equation}
p(t) = S(T-t)'\zeta + \int_t^T S(s-t)'\left[ T(t)^*p(s)+ f(s)\right]ds - \int_t^T S(s-t)'q(s)dW(s).
\end{equation}
First we have  $S(T-t)'\zeta^{\varepsilon_k} \rightarrow S(T-t)'\zeta$ weakly in $H'$ since, for any $h \in H$,
$$
{}_{H'}\braket{S(T-t)'\zeta^{\varepsilon_k},h}_H = {}_{E'}\braket{\zeta, S(\varepsilon_k)S(T-t)h}_E
\to {}_{E'}\braket{\zeta,S(T-t)h}_E
 = {}_{H'}\braket{S(T-t)'\zeta,h}_H.
$$
Then we address
 the drift term. We take $h \in H$ and we have
\begin{equation*}
\begin{split}
\E\int_t^T {}_{H'}\braket{S(s-t)'T_{\alpha_k}(s)^*p_k(s),h}_H ds &= \E\int_t^T {}_{H'}\braket{T_{\alpha_k}(s)^*p_k(s),S(s-t)h}_H ds \\
& = \E\int_t^T {}_{H'}\braket{p_k(s),\left[T_{\alpha_k}(s) - T(s)\right]S(s-t)h}_H ds \\
&+ \E\int_t^T {}_{H'}\braket{p_k(s),T(s)S(s-t)h}_H ds.\\
\end{split}
\end{equation*}
Let us consider the first term on the right hand side:
\begin{equation}\label{convnemi}
\begin{split}
\E\int_t^T &{}_{H'}\braket{p_k(s),\left[T_{\alpha_k}(s) - T(s)\right]S(s-t)h}_H ds \\
&= \E\int_t^T \braket{(T-s)^{\lambda/2}p_k(s),(T-s)^{-\lambda/2}\left[T_{\alpha_k}(s) - T(s)\right]S(s-t)h}_H ds \\
&\leq \left( \E \left( \int_t^T (T-s)^{\lambda}\abs{p_k(s)}^2 ds\right)^{r'/2} \right)^{1/r'} \cdot \\
&\cdot \left( \E \left( \int_t^T (T-s)^{-\lambda}\abs{\left[T_{\alpha_k}(s) - T(s)\right]S(s-t)h}_H^2 ds \right)^{r/2} \right)^{1/r} \\
&\leq K \cdot \left( \E \left( \int_t^T (T-s)^{-\lambda}\abs{\left[T_{\alpha_k}(s) - T(s)\right]S(s-t)h}_H^2 ds \right)^{r/2} \right)^{1/r}.
\end{split}
\end{equation}
Since $S(s-t)h \in E$ (for $s\neq t$) then $\abs{T_{\alpha_k}(s)S(s-t)h - T(s)S(s-t)h}_E \to 0$
for all $\omega \in \Omega$, $s \in [0,T]$. Next we have
\begin{equation}
\begin{split}
\abs{T(s)S(s-t)h}^2_H &= \int_{\mathcal{O}}\abs{f'(X(s,\xi),u(s))\left[S(s-t)h\right](\xi)}^2m(dx) \\
&\leq \left(\sup_{\xi \in \mathcal{O}}\abs{f'(X(s,\xi),u(s))}\right)^2 \abs{S(s-t)h}^2_H \\
&\leq C_F(\omega)\cdot \abs{h}^2.
\end{split}
\end{equation}
where we have used the polynomial growth condition
\eqref{polygrowth} on $f'$ and
$C_F(\omega)$ was introduced in \eqref{crescita.DF}.
Using \eqref{polygrowthdue} instead of \eqref{polygrowth}
we prove by similar passages that $\abs{T_\alpha(s)S(s-t)h}^2_H \leq C_F(\omega)\cdot \abs{h}^2$.
Thanks to the assumption that $\lambda < 1$ the right-hand side of \eqref{convnemi} then tends to zero by dominated
convergence.

To prove that $\E\int_t^T {}_{H'}\braket{p_k(s),T(s)S(s-t)h}_H ds
\to \E\int_t^T {}_{H'}\braket{p(s),T(s)S(s-t)h}_H ds$ it is enough to notice
that $s\to T(s)S(s-t)h\,1_{s\ge t}$ lies in the space
$L^{r}_{\mathcal{F}}(\Omega;L^2([0,T],-\lambda,H))$ and to use
the weak convergence property of $p_k$.

Finally,
regarding the stochastic convolution it is enough to notice that the map
\[ q \longmapsto \int_t^T S(s-t)^\sim q(s)dW(s)\]
is linear and continuous from $L^{r'}_{\mathcal{F}}(\Omega;  L^2([0,T]; \mathcal{L}_2(K,V')))$ to $L^{r'}(\Omega,V')$, hence weakly continuous. Indeed, for fixed $t$, let us define:
\begin{equation*}
I(\bar{q}) := \int_0^T \left( S(s-t)^\sim 1_{\lbrace s > t \rbrace} \right) \bar{q}(s) dW(s)
\end{equation*}
Then, using Burkholder-Davis-Gundy inequality we get
\begin{equation*}
\begin{split}
\left(\E \abs{I(\bar{q})}^{r'}\right)^{1/r'} &\leq K \left( \E \left( \int_0^T \abs{S(s-t)^\sim 1_{\lbrace s > t \rbrace} \bar{q}(s)}^2_{\mathcal{L}_2(K,V')}ds \right)^{r'/2} \right)^{1/r'} \\
&\leq K \cdot \norm{\bar{q}}_{L^{r'}(\Omega; L^2([0,T];\mathcal{L}_2(K,V')))} < \infty,
\end{split}
\end{equation*}
thanks to Hypothesis \ref{hp_V2}. Then the existence part follows.\\
\newline
\textbf{Uniqueness.}\\
The BSPDE \eqref{eq.backward} is linear, then it is enough to prove that if the forcing term  $f(t)$ and the final condition $\zeta(t)$  are zero then also the solution is identically zero.
Hence we are dealing with the following
\begin{equation}\label{BSDEperunicita}
p(t) = \int_t^T S(s-t)'T(s)^*p(s)ds - \int_t^T S(s-t)'q(s)dW(s).
\end{equation}
We apply the operator    $S(1/n)'$ to both sides   and   setting $p^n:=S(1/n)'p$,  $q^n:=S(1/n)'q$ we
obtain
\begin{equation}\label{approxuniq}
p^n(t) = \int_t^T S(s-t)'S(1/n)'T(s)^*p(s)ds - \int_t^T S(s-t)'q^n(s)dW(s).
\end{equation}
Notice that
\begin{equation*}
\begin{split}
\abs{S(1/n)' T(s)^*p(s)}_{H'} &\leq \abs{S(1/n)'}_{\mathcal{L}(E',H')}\abs{T(s)^*p(s)}_{E'}\\
&\leq \abs{S(1/n)'}_{\mathcal{L}(E',H')} \abs{T(s)}_{\mathcal{L}(E)}\abs{p(s)}_{E'} \\
&\leq C_n(1+\abs{X_s}_E^k)\abs{p(s)}_{H'} \\
&\leq C_n(\omega)\abs{p(s)}_{H'}, \quad \mP-\text{a.s.}
\end{split}
\end{equation*}
Moreover we have the following integrability condition:  for $\varepsilon >9$,
\begin{equation}\label{eq.int_drift}
\int_0^{T-\varepsilon}\abs{p(s)}_{H'} dt < \infty, \quad \mP-\text{a.s.}
\end{equation}
thanks to the fact that $p \in L_{\mathcal{F}}^{r'}(\Omega; L^2([0,T],\lambda;H'))$. Notice that 
\eqref{eq.int_drift} fails in general for $\varepsilon=0$.
Next, by the semigroup property, 
$S(1/n)=S(1/2n)S(1/2n)$.
By Hypothesis \ref{hp_eq}-1 we have  $S(1/2n)(H)\subset  E$. Now we exploit
  Hypothesis \ref{hp_V2} to deduce that
$S(1/2n) (E) \subset  V$ and we conclude that  
$S(1/n)(H)\subset V$ and even that $S(1/n)\in\mathcal{L}(H,V)$,
by the closed graph theorem. It follows that
$q^n(s)\in\mathcal{L}_2(K,H') $ and we have the
estimate 
\begin{equation}\label{eq.int_diffusion}
\abs{q^n(s)}_{\mathcal{L}_2(K,H')} \leq \abs{S(1/n)'}_{\mathcal{L}(V',H')}\abs{q(s)}_{\mathcal{L}_2(K,V')} = C_n\abs{q(s)}_{\mathcal{L}_2(K,V')},
\end{equation}
 which guarantees that the stochastic convolution in 
 \eqref{approxuniq}
  is well defined.
We write equation \eqref{approxuniq} on the time interval $[0,T-\epsilon]$
obtaining
\begin{equation*}
p^n(t) = S(T-\varepsilon - t)'\zeta^n_{\varepsilon} + \int_t^{T-\varepsilon} S(s-t)'S(1/n)' T(s)^*p(s)ds - \int_t^{T -\varepsilon}S(s-t)'q^n(s)dW(s),
\end{equation*}
where $\zeta^n_\varepsilon: = p^n(T-\varepsilon)$ is the value of the solution at time $T-\varepsilon$.
Next we fix an arbitrary $\gamma$  in the space $ L_{\mathcal{F}}^2(\Omega\times[0,T],E) $
and define
\begin{equation*}
y(t) = \int_0^t S(t-s)\left[ T(s)y(s) + \gamma(s)\right]ds.
\end{equation*}
Note that by Proposition \ref{prop.forward.duale} $y$ is the solution to
the equation \eqref{eq.y.mild} with $\eta=0$.

Replacing the operator $A$ with the Yosida approximations $A_m$ we call
$p^{m,n}$, $ y^{m}$ the solutions to the corresponding equations.
We are in a position to
 apply the It\^{o} formula to $\braket{p^{m,n},y^{m}}$, thanks to the integrability conditions obtained in \eqref{eq.int_drift} and \eqref{eq.int_diffusion}.
Passing to the limit as $m \to \infty$ and taking the expectation we obtain, see \citep{tessitore1996existence} for the details,
\begin{equation}\label{approxneps}
\begin{split}
&\E\int_0^{T-\varepsilon} {}_{H'}\braket{p^n(t),\gamma(t)}_H dt + \E\int_0^{T-\varepsilon}\left[ {}_{H'}\braket{p^n,T(t)y(t)}_H - {}_{H'}\braket{S(1/n)' T(t)^*p(t),y(t)}_H \right]dt \\
&= \E{}_{H'}\braket{y(T-\varepsilon),\zeta^n_{\varepsilon}}_H.
\end{split}
\end{equation}
Now we pass to the limit letting $n\rightarrow \infty$. We have
\begin{equation}
\begin{split}
&\E\int_0^{T-\varepsilon} {}_{H'}\braket{p^n(t),\gamma(t)}_H dt 
\to \E\int_0^{T-\varepsilon} {}_{H'}\braket{p(t),\gamma(t)}_H dt ,
\\
&  \E\int_0^{T-\varepsilon} {}_{H'}\braket{p^n(t),T(t)y(t)}_H dt \to
    \E\int_0^{T-\varepsilon} {}_{H'}\braket{p(t),T(t)y(t)}_H dt ,
\end{split}
\end{equation}
by dominated convergence, due to the 
fact that $p(t)$ takes values in $H'$.
We also have
\begin{equation*}
 \E\int_0^{T-\varepsilon}  {}_{E'}\braket{T(t)^*p(t),y(t)}_E \to
  \E\int_0^{T-\varepsilon}{}_{E'}\braket{T(t)^*p(t),S(1/n)y(t)}_E\,dt
\end{equation*}
Indeed, from the analyticity of the semigroup in $E$, see \cite{lunardi2012analytic},
it follows that almost all paths of the process
 $y$ take values in $\overline{D(A)}$ (the closure of the domain of $A$
 in $E$), so that
 $S(1/n)y(t)\to y(t)$ in $E$ 
and the desired conclusion follows from the dominated convergence theorem. 
So from \eqref{approxneps},  letting $n\rightarrow \infty$, it follows that
\begin{equation}\label{eq_duality_p_tilde}
\E\int_0^{T-\varepsilon} \braket{p(t),\gamma(t)}dt = \E\braket{y(T-\varepsilon),p(T-\varepsilon)}.
\end{equation}
Setting $t=T-\epsilon$ in \eqref{BSDEperunicita} and taking the conditional
expectation given $\mathcal{F}_{T-\varepsilon}$ we obtain
$p(T-\varepsilon)=\E \,[\int_{T-\varepsilon}^T T(s)^*p(s)ds
\mid {\mathcal{F}_{T-\varepsilon}}]$
so that
\begin{equation}
\begin{split}
\E\int_0^{T-\varepsilon} \braket{p(t),\gamma(t)}dt &
= \E\braket{y(T-\varepsilon),
\E\,[\int_{T-\varepsilon}^T T(s)^*p(s)ds\mid {\mathcal{F}_{T-\varepsilon}}]
} \\
&= \E \int_{T-\varepsilon}^T \braket{y(T-\varepsilon), T(s)^*p(s)}ds \\
\end{split}
\end{equation}
which converges to $0$ if $\varepsilon \rightarrow 0$.  We finally obtain
$\E\int_0^T \braket{p(t),\gamma(t)}dt = 0$ 
and we get the result by the arbitrariness of $\gamma$.
\end{proof}
We immediately arrive at the following conclusion.
\begin{proof}[Proof of Theorem \ref{thm_2}]
Thanks to Theorem \ref{t.ex_uniq_BSPDE}, there exist a unique process $p(\cdot)$ which is the first component of the solution to the BSPDE \eqref{eq.backward}. The conclusion follows using Theorem \ref{thm_1}.
\end{proof}

\bibliography{mybib}
\bibliographystyle{plain}

\end{document}